\newcommand{\yy}[1]{\hspace*{#1in}}
\newtheorem{theorem}{Theorem}[section]
\newproof{proof}{\textbf{Proof}}
\journal{Theoretical Computer Science}
\begin{document}

%\linenumbers

\begin{frontmatter}

%% Title, authors and addresses
%%
\title{Subtrees and BC-subtrees of maximum degree $\leq k$ in trees}

\author[YY]{Yu~Yang}
\ead{yangyu@pdsu.edu.cn}
\author[YY]{Xiao-xiao Li}
\ead{lixiaoxiao@pdsu.edu.cn}
\author[YY]{Meng-yuan Jin}
\ead{jinmengyuan@pdsu.edu.cn }
\author[lilong]{Long Li}
\ead{lilong@home.hpu.edu.cn}
\author[Hwang]{Hua Wang}
\ead{hwang@georgiasouthern.edu}
\author[XD]{Xiao-Dong Zhang\corref{cor1}}
\ead{xiaodong@sjtu.edu.cn}

%\cortext[cor1]{Corresponding author: Tel: +1 858 869 9279, Fax: +1 858 822 7556}
\cortext[cor1]{Corresponding author}
%% \address{Address\fnref{label3}}
%% \fntext[label3]{}
\address[YY]{School of Computer Science, Pingdingshan University, Pingdingshan 467000, China}
%\address[xiaojun]{School of Electrical and Mechanical Engineering, Pingdingshan University, Pingdingshan 467000, China}
%\address[JC]{Navigation College, Dalian Maritime University, Dalian, 116026, China}
\address[lilong]{College of Computer Science and Technology, Henan Polytechnic University, Jiaozuo, 454003, China}
\address[Hwang]{Department of Mathematical Sciences, Georgia Southern University Statesboro, GA 30460, USA}
\address[XD]{School of Mathematical Sciences,Shanghai Jiao Tong University, Shanghai, 200240, China}

%\address[Hongbo1]{School of Information, Dalian Maritime University, Dalian, 116026, China}
%% \ead[url]{home page}
%% \ead[url]{home page}

%% use optional labels to link authors explicitly to addresses:

%\address{}

\begin{abstract}

The subtrees and BC-subtrees (subtrees where any two leaves are at even distance apart) have been extensively studied in recent years. Such structures, under special constraints on degrees, have applications in many fields. Through an approach based on generating functions, we present recursive algorithms for enumerating various subtrees and BC-subtrees of maximum degree $\leq k$ in trees. The algorithms are illustrated through detailed examples.  We also briefly discuss, in trees, the densities of subtrees (resp.~BC-subtrees) of maximum degree $\leq k(\geq 2)$ among all subtrees (resp.~BC-subtrees).
\end{abstract}

\begin{keyword}
%% keywords here, in the form: keyword \sep keyword
Subtree  \sep  BC-subtree \sep Maximum degree \sep   Generating function

 MSC[2020] 05C30\sep 05C85\sep  68R01\sep 68R10 
 %codes here, in the form: \MSC code \sep code
%% or \MSC[2008] code \sep code (2000 is the default)

\end{keyword}

\end{frontmatter}

%%
%% Start line numbering here if you want
%%
% \linenumbers

%% main text

\section{Introduction}
\label{Section:intro}

Let $G=(V(G), E(G))$ be a simple graph on $n$ vertices and $m$ edges with vertex set $V(G)=\{v_1,v_2,\dots,v_n\}$ and edge set $E(G)=\{e_1,e_2,\dots,$ $e_{m}\}$, the subtree number index, denoted by $ST(G)$, is defined as the total number of non-empty subtrees of $G$. %namely,
%$$ST(G)=\sum\limits_{i=0}^{n-1}s_i$$ where $s_i$ is the number of subtrees of $G$ with $i$ edges.
Related to subtrees, a BC-subtree is a subtree in which the distances between any two leaves are even. Similar to the subtree number index, we define the BC-subtree number index, denoted by $BST(G)$, as the total number of non-empty BC-subtrees of $G$.
%$$BST(G)=\sum\limits_{i=2}^{n-1}bs_i$$ where $bs_i$ is the number of BC-subtrees of $G$ with $i$ edges.
Both of these indices appeared to have applications in the design of reliable communication network \cite{xiao2017trees}, bioinformatics \cite{knudsen2003optimal}, and characterizing structural properties of molecular and graphs \cite{jam1983average,1984Monotonicity,merrifield1989topological,Jamison1990Alternating,haslegrave2014extremal,yang2013bc,yang2015subtrees}.

By way of generating functions, Yan and Yeh presented the algorithms of enumerating subtrees of trees \cite{yan06}. Following similar approaches, Yang et al. \cite{yang2013bc,yang2015algorithms} considered the enumeration of BC-subtrees of trees, unicyclic and edge-disjoint bicyclic graphs. While doing so they introduced two additional distance related variables and weighted cyclic contraction. Also with generating functions, Chin et al. \cite{chin2018subtrees} studied the subtrees of  complete graphs, complete bipartite graphs, theta graphs, as well as the ratio of spanning trees to all subtrees in complete graphs.

Along this line, using ``deletion and contraction", Yang et al. \cite{yang2015subtrees,yang2017spiro} studied the subtree number and BC-subtree number of spiro and polyphenylene chains, molecular graphs of a class of important polycyclic aromatic hydrocarbons that have broad applications in organic and drug synthesis \cite{deng2012wiener,dovslic2010chain,chen2009six,li2012hosoya}. They also further confirmed the reverse correlation between the subtree number and the Winer index on spiro chains, polyphenylene chains, hexagonal chains and phenylene chains \cite{yang2015subtrees,deng2012wiener,yang2017algorithms,gutman1987wiener}. That is, the chains with the minimum subtree numbers coincide with the ones that attain the maximum Wiener indices, and vice versa.

More recently, Yang et al. \cite{yangyu2020} presented the explicit formulae for the expected values of subtree number in random spiro chains and polyphenylene chains, and compared the expected values of these two indices. Through ``path contraction'' that carries weights,  Yang et al. \cite{yang2020trialgorithms}  solved the subtree enumerating problem for tricyclic graphs.

Over the years, extremal problems related to the subtree numbers \cite{jam1983average,haslegrave2014extremal,yang2017spiro, sze05,szekely2007binary, zhang2015minimal, kirk2008largest,zhang2013number,vince2010average} and their relations with other indices such as the Wiener index  \cite{yang2015subtrees,yang2017algorithms,szekely2014extremal,wagner2007correlation,2019Some} have been extensively studied.

On the other hand, it is often interesting to consider substructures with certain fixed parameters. Such questions have applications in bioinformatics \cite{AKUTSU20152}, natural language processing \cite{wangkai20095312583205}, comparison and search of XML data \cite{MilanoSC06}, and logistics \cite{LAU2006385}. In this paper, we will study the numbers of subtrees and BC-subtrees with maximum degree $k$.

First we introduce the necessary terminologies and definitions, and establish some useful lemmas in Section~\ref{Sec:Terminology}. Theoretical background is provided in Section~\ref{sec:theojustifica}. We present the algorithms of enumerating various subtrees and BC-subtrees with maximum degree $\leq k$ of trees in  Section \ref{sec:algoforsubtree} and Section \ref{sec:algoforbcsubtree}, respectively. Section \ref{sec:algoforimplementanddiscuss} illustrates the details of the proposed algorithms. We also discuss the proportion of subtrees (resp.~BC-subtrees) with maximum degree $\leq k(\geq 2)$ in general trees. Lastly, Section \ref{Sec:Conclusion} concludes the paper and proposes directions of potential future work.

\section{Terminology and notations}
\label{Sec:Terminology}
We first introduce the technical notations and lemmas that will be used in the discussion. For more background information one may check \cite{yang2015subtrees,yan06,yangyu2020}.

Let $T =(V(T) ,E(T);f,g)$ be a weighted tree with $V(T)=\{v_1,v_2,\dots,v_n\}$ and $E(T)=\{e_1,e_2,\dots,$ $e_{n-1}\}$.  When the subtree problems are considered, we start with default vertex-weight function $f:=f_1$ and edge-weight function $g:=g_1$, with $f_1:V(T)\rightarrow \Re_0\times \Re_1 \times \cdots \times \Re_{k}$ (here $\Re_i$ represents the $i$-th weight of the vertex) and  $g_1:E(T)\rightarrow \Re$ ($\Re$ is a commutative ring with a unit element 1). Consequently, each vertex weight is a $(k+1)$-dimensional vector $f(v)=(f(v)_0,f(v)_1,\dots,f(v)_k)$ for any $v\in V(T)$. Here $f(v)_i$  represents the number of subtrees rooted at $v$ with maximum degree $\leq k$, with the additional constraint that the degree of $v$ is $i$ $(i=0,1,\dots,k)$. Obviously, for integer $i<0$, $f(v)_i=0$.

When discussing the BC-subtrees, $O_k^v$-subtrees and $E_k^v$-subtrees (to be defined later) of $T$, we start with its vertex-weight function $f:=f_2$ and edge-weight function $g:=g_2$ with $f_2:V(T)\rightarrow (\Re_0\times \Re_1 \times \cdots \times \Re_{k}, \Re_0\times \Re_1 \times \cdots \times \Re_{k})$ and $g_2:E(T)\rightarrow \Re$ (where $\Re$ is a commutative ring with a unit element 1). In this case, each vertex weight is a $(2k+2)$-dimensional vector $f(v)=(f(v)_{o}^{0},f(v)_{o}^{1},\dots ,f(v)_{o}^{k};f(v)_{e}^{0},f(v)_{e}^{1},\dots,f(v)_{e}^{k}$) for any $v\in V(T)$. Here $f(v)_{o}^{i}$ (resp.~$f(v)_{e}^{i}$) represents the number of subtrees rooted at $v$ with all the distance between each pendant vertex and $v$ is odd (resp. even), as well as the maximum degree $\leq k$, and the degree of $v$ is $i$ $(i=0,1,\dots,k)$. We let $f(v)_{o}^{0}=1$ and it is easy to see that, for integer $i<0$, $f(v)_{o}^{i}=0, f(v)_{e}^{i}=0$.

Here we employ the convention that if $\{a_n\}_{\geq 0}$ is a sequence and $j<i$, then $\prod\limits_{t=i}^ja_t=1$ and $\sum\limits_{t=i}^ja_t=0$. For convenience, we list the necessary notations and terminologies below.
\begin{itemize}
  \item $d_T(u,v)$: the distance between vertices $u, v\in V(T)$.
  \item $deg_T(v)$: the degree of $v\in V(T)$.
  \item $``\backslash "$: the removing operation.
  \item $L(T)$: the leaf set of $T$.
   \item $\mathcal{S}_{\leq k}(T)$:  the set of all subtrees with maximum degree $\leq k$ ($\in \mathbb{N}$) of $T$.
  \item $\mathcal{S}_{\leq k}(T;V_S)$:   the set of subtrees containing $V_S(\subseteq V(T))$ with maximum degree $\leq k$.
  \item $\mathcal{S}_{\leq k,o_j}(T;v)$:
  {\small $$ \mathcal{S}_{\leq k,o_j}(T;v)=\{T_1|T_1 \in \mathcal{S}_{\leq k}(T;v)\wedge d_{T_1}(v,l)\equiv1(\text{mod}~2)(\forall~l\in L(T_1))\wedge deg_{T_1}(v)=j\},$$}
      where $j=0,1,\dots,k$, and we call each subtree in $\mathcal{S}_{\leq k,o_j}(T;v)$ the $v_{o_j}^k$-subtree of $T$.
  \item $\mathcal{S}_{\leq k,e_j}(T;v)$:
        {\small$$ \mathcal{S}_{\leq k,e_j}(T;v)=\{T_1|T_1 \in \mathcal{S}_{\leq k}(T;v)\wedge d_{T_1}(v,l)\equiv0(\text{mod}~2)(\forall~l\in L(T_1))\wedge deg_{T_1}(v)=j\}, $$}
       where $j=0,1,\dots,k$, and we call each subtree in $\mathcal{S}_{\leq k,e_j}(T;v)$ the $v_{e_j}^k$-subtree of $T$.
  \item $\omega_{k,o_j}^v(T_1)$, $\omega_{k,e_j}^v(T_1)$: the $\omega_{k,o_j}^v$, $\omega_{k,e_j}^v$ weight of subtree $T_1\in \mathcal{S}_{\leq k}(T;v)$, respectively.

  \item $\mathcal{S}_{{BC}_{\leq k}}(T)$: the set of all BC-subtrees with maximum degree $\leq k$($\geq 2$ is an integer) of $T$.
  \item $\mathcal{S}_{{BC}_{\leq k}}(T;V_S)$:  the set of BC-subtrees containing $V_S(\subseteq V(T))$ with maximum degree $\leq k$($\geq 2$ is an integer).
  \item $\omega_k(T_s)$: the max $k$ degree {\em subtree weight} of $T_s\in \mathcal{S}_{\leq k}(\cdot)$.
  \item $\omega_{bc}^k(T_2)$: the max $k$ degree {\em BC-subtree weight} of $T_2\in \mathcal{S}_{{BC}_{\leq k}}(\cdot)$.
  \item $F_{{BC}_{\leq k}}(\cdot)$: the sum of {\em BC-subtree weight} of BC-subtrees in $\mathcal{S}_{{BC}_{\leq k}}(\cdot)$.
  \item $ \eta_{\leq k}(.)$: the number of subtrees in set $\mathcal{S}_{\leq k}(\cdot)$.
  \item $ \eta_{{BC}_{\leq k}}(.)$: the number of BC-subtrees in set $\mathcal{S}_{{BC}_{\leq k}}(\cdot)$.
\end{itemize}

For a given subtree $T_s\in \mathcal{S}_{\leq k}(T)$, its {\em max $k$ degree} subtree weight is defined as
\[
\omega_k(T_s)=\prod_{v\in V(T_s)}\sum\limits_{i=0}^{k-deg_{T_s}(v)}f(v)_{i}\prod_{e\in E(T_s)}g(e).
\]

And we define the  {\em max $k$ degree} subtree generating function of $T$ by
\[
F_{\leq k}(T; f, g)=\sum_{T_s\in \mathcal{S}_{\leq k}(T)}\omega_k(T_s).
\]
Similarly, the {\em max $k$ degree} subtree generating function of $G$ containing $V_S$ are as follows:
\[
F_{\leq k}(T;f, g;V_S)=\sum_{T_s\in \mathcal{S}_{\leq k}(T_s,V_S)}\omega_k(T_s),
\]

Given $T_1\in \mathcal{S}_{\leq k}(T;v)$($v\in V(T)$), let
$$ S_o(T_1)=\{u|u \in V(T_1)\wedge d_{T_1}(v,u)\equiv1(\text{mod}~2)\} $$
and
$$ S_e(T_1)=\{u|u \in V(T_1)\wedge d_{T_1}(v,u)\equiv0 (\text{mod}~2)\}.$$
Then:
\begin{itemize}
\item the $\omega_{k,o_j}^v$ ($j=0,1,\dots,k$) weight of $T_1$, denoted by $\omega_{k,o_j}^v(T_1)$, is defined as:
\begin{itemize}
  \item If $T_1$ is a weighted single vertex $v$, then $\omega_{k,o_j}^v(T_1)=f(v)_{o}^{j}$;
  \item otherwise,
  $$\omega_{k,o_j}^v(T_1)=a_1a_2a_3a_4a_5,$$
  where $a_1=\prod\limits_{e\in E(T_1)}g(e)$, $a_2=f(v)_{o}^{j-deg_{T_1}(v)}$, $a_3=\prod\limits_{u\in S_o(T_1)}\sum\limits_{i=0}^{k-deg_{T_1}(u)}f(u)_e^{i}$, \\$a_4=\prod\limits_{\substack{u\in S_e(T_1)\backslash v\\ u \notin L(T_1)}}\sum\limits_{i=0}^{k-deg_{T_1}(u)}f(u)_{o}^{i}$, $a_5=\prod\limits_{\substack{u\in S_e(T_1)\backslash v\\ u \in L(T_1)}}\sum\limits_{i=1}^{k-deg_{T_1}(u)}f(u)_{o}^{i}.$

\end{itemize}
\item the $\omega_{k,e_j}^v$ ($j=0,1,\dots,k$) weight of $T_1$, denoted by $\omega_{k,e_j}^v(T_1)$, is defined as:
\begin{itemize}
  \item If $T_1$ is a weighted single vertex $v$, then $\omega_{k,e_j}^v(T_1)=f(v)_{e}^{j}$;
  \item otherwise,
   $$\omega_{k,e_j}^v(T_1)=a_6a_7a_8a_9a_{10}$$
  where $a_6=f(v)_{e}^{j-deg_{T_1}(v)}$, $a_7=\prod\limits_{u\in S_e(T_1)\backslash v}\sum\limits_{i=0}^{k-deg_{T_1}(u)}f(u)_e^{i}$, $a_8=\prod\limits_{\substack{u\in S_o(T_1)\\ u \notin L(T_1)}}\sum\limits_{i=0}^{k-deg_{T_1}(u)}f(u)_{o}^{i}$, $a_9=\prod\limits_{\substack{u\in S_o(T_1)\\ u \in L(T_1)}}\sum\limits_{i=1}^{k-deg_{T_1}(u)}f(u)_{o}^{i}$, $a_{10}=\prod\limits_{e\in E(T_1)}g(e).$
\end{itemize}
\end{itemize}

The $\omega_{k,o_j}^v$, $\omega_{k,e_j}^v$  subtree generating function of $\mathcal{S}_{\leq k}(T;v)$ are respectively defined as
 $$ F_{\leq k,o_j}(T;f,g;v)=\sum \limits_{T_1\in \mathcal{S}_{\leq k}(T;v)}\omega_{k,o_j}^v(T_1)=\sum \limits_{T_1\in \mathcal{S}_{\leq k,o_j}(T;v)}\omega_{k,o_j}^v(T_1), $$
 and
 $$ F_{\leq k,e_j}(T;f,g;v)=\sum \limits_{T_1\in \mathcal{S}_{\leq k}(T;v)}\omega_{k,e_j}^v(T_1)=\sum \limits_{T_1\in \mathcal{S}_{\leq k,e_j}(T;v)}\omega_{k,e_j}^v(T_1).$$

Similarly, for a given BC-subtree $T_2\in \mathcal{S}_{{BC}_{\leq k}}(T)$, we define
$$ B_e(T_2)=\{u|u \in V(T_2)\wedge d_{T_2}(u,v_l)\equiv0 (\text{mod}~2)\} $$
and
 $$ B_o(T_2)=\{u|u \in V(T_2)\wedge d_{T_2}(u,v_l)\equiv1 (\text{mod}~2)\} $$
 where $v_l \in L(T_2)$. The {\em max $k$ degree} BC-subtree weight of $T_2\in \mathcal{S}_{{BC}_{\leq k}}(T)$ is
\begin{equation}
\begin{split}
\omega_{bc}^k(T_2)=(b_1b_2b_3+b_4b_5b_6)b_7
\end{split}
\end{equation}
where
\begin{itemize}
  \item $b_1=\prod\limits_{u\in B_e(T_2)}\sum\limits_{m=0}^{k-deg_{T_2}(u)}f(u)_{e}^{m}$,  $b_2=\prod\limits_{\substack{u\in B_o(T_2)\\u \in L(T_2)}}\sum\limits_{m=1}^{k-deg_{T_2}(u)}f(u)_{o}^{m}$,
  \item $b_3=\prod\limits_{\substack{u\in B_o(T_2)\\u \notin L(T_2)}}\sum\limits_{m=0}^{k-deg_{T_2}(u)}f(u)_{o}^{m}$, $b_4=\prod\limits_{\substack{u\in B_e(T_2)\\u \in L(T_2)}}\sum\limits_{m=1}^{k-deg_{T_2}(u)}f(u)_{o}^{m}$,
  \item $b_5=\prod\limits_{\substack{u\in B_e(T_2)\\u \notin L(T_2)}}\sum\limits_{m=0}^{k-deg_{T_2}(u)}f(u)_{o}^{m}$, $b_6=\prod\limits_{u\in B_o(T_2)}\sum\limits_{m=0}^{k-deg_{T_2}(u)}f(u)_{e}^{m}$,
  \item $b_7=\prod\limits_{e\in E(T_2)}g(e)$.
\end{itemize}

And the  {\em max $k$ degree} BC-subtree generating function of $T$ is
\[
 F_{{BC}_{\leq k}}(T; f, g)=\sum \limits_{T_2\in S_{{BC}_{\leq k}}(T)}\omega_{bc}^k(T_2).
 \]

Similarly,
\[
 F_{{BC}_{\leq k}}(T; f, g;V_S)=\sum \limits_{T_2\in S_{{BC}_{\leq k}}(T;V_S)}\omega_{bc}^k(T_2).
 \]
where $V_S(\subseteq V(T))$.

With above notations, it is not difficult to see
\[
 \eta_{\leq k}(T) = F_{\leq k}(T;(1,0,\dots,0),1),~~~~\eta_{\leq k}(T;V_S)= F_{\leq k}(T; (1,0,\dots,0),1;V_S),
\]
\[
 \eta_{{BC}_{\leq k}}(T) = F_{{BC}_{\leq k}}(T;(1,0,\dots,0;1,0,\dots,0),1),
\]
and
\[
\eta_{{BC}_{\leq k}}(T;V_S)= F_{{BC}_{\leq k}}(T; (1,0,\dots,0;1,0,\dots,0),1;V_S).
\]

%Through defining the subtree weight, BC-subtree weight and employ the method of generating function, Yan and Yeh~\cite{yan06} developed algorithms for counting the subtrees, and Yang et al. \cite{yang2013bc} presented algorithms for enumerating the BC-subtrees. Here, we also use the generating functions to solve the enumerating problems of subtrees and BC-subtrees with maximum degree $\leq k$ of trees, and we first present the theoretical justifications.

\section{Theoretical background}
\label{sec:theojustifica}

Let $T =(V(T),E(T);f,g)$ be a weighted tree on $n\geq 2$ vertices, and let $u$ be a leaf vertex and $p_u =(u, v)$ be a pendant edge of $T$, we define a weighted tree $T' =(V(T'),E(T'); f',g')$ from $T$ with $V(T')=V(T)\backslash u$, $E(T')=E(T)\backslash p_u$,
\begin{align}\label{equ:genertreetrans}
f'(w)_i & =\begin{cases}
f(v)_i+f(v)_{i-1}\sum\limits_{j=0}^{k-1}f(u)_jg(p_u)&\text{if~} w=v,\\
f(w)&\text{otherwise}.
\end{cases}
\end{align}
for any $i=0,1,\dots,k$ and $w\in V(T')$, and $g'(e)=g(e)$ for any $e\in E(T')$.

\begin{theorem}
\label{theo:subtreecondes}
Given $T$ and $T'$ are weighted trees as defined above, we have
\[
F_{\leq k}(T; f, g)=F_{\leq k}(T'; f', g')+\sum\limits_{j=0}^{k}f(u)_j.
\]
\end{theorem}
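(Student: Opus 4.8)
The plan is to split the sum defining $F_{\leq k}(T;f,g)$ according to how each subtree meets the leaf $u$, and then to match the resulting pieces against the transformed generating function $F_{\leq k}(T';f',g')$. Since $u$ is a leaf whose unique neighbour is $v$, every $T_s\in\mathcal{S}_{\leq k}(T)$ belongs to exactly one of three groups: those with $u\notin V(T_s)$; the single-vertex subtree $\{u\}$; and those containing both $u$ and $v$ (any subtree containing $u$ and at least two vertices must also contain $v$). I would first dispose of the singleton: for $T_s=\{u\}$ one has $deg_{T_s}(u)=0$, so $\omega_k(\{u\})=\sum_{j=0}^{k}f(u)_j$, which is exactly the additive term appearing in the statement. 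It then remains to prove that the two remaining groups together contribute $F_{\leq k}(T';f',g')$.

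Next I would identify the $u$-avoiding subtrees of $T$ with the subtrees of $T'$: deleting $u$ and $p_u$ leaves precisely the graph $T'$, and the maximum-degree-$\leq k$ condition is insensitive to this deletion, so $\mathcal{S}_{\leq k}(T')$ is literally the set of max-degree-$\leq k$ subtrees of $T$ missing $u$. For such a subtree with $v\notin V(T_s')$ we have $f'=f$ and $g'=g$ throughout, so $\omega_k$ is unchanged and no $u$-extension is possible; these match trivially.

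The crux is a local computation at $v$. Fix a subtree $T_s'\in\mathcal{S}_{\leq k}(T')$ containing $v$ and set $d'=deg_{T_s'}(v)$. Substituting $f'(v)_i=f(v)_i+f(v)_{i-1}\sum_{j=0}^{k-1}f(u)_j\,g(p_u)$ and re-indexing the shifted sum (using $f(v)_{-1}=0$) yields
$$\sum_{i=0}^{k-d'}f'(v)_i=\sum_{i=0}^{k-d'}f(v)_i+\Bigl(\sum_{j=0}^{k-1}f(u)_j\,g(p_u)\Bigr)\sum_{i=0}^{k-d'-1}f(v)_i.$$
Multiplying by the common factor contributed by the rest of $T_s'$, namely $\prod_{w\neq v}\bigl(\sum_{i}f(w)_i\bigr)\prod_{e}g(e)$, the first summand reproduces $\omega_k(T_s')$ viewed as a subtree of $T$ (group: contains $v$, not $u$), while the second reproduces $\omega_k\bigl(T_s'\cup\{u,p_u\}\bigr)$, in which $u$ is a new leaf of degree $1$ (factor $\sum_{j=0}^{k-1}f(u)_j$), the edge $p_u$ contributes $g(p_u)$, and $v$ now has degree $d'+1$ (factor $\sum_{i=0}^{k-d'-1}f(v)_i$).

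The step I expect to demand the most care is the degree bookkeeping at the boundary $d'=k$. There the second factor $\sum_{i=0}^{k-d'-1}f(v)_i=\sum_{i=0}^{-1}f(v)_i$ is an empty sum, hence $0$, which is exactly correct: attaching $u$ would raise $deg(v)$ to $k+1$, so $T_s'\cup\{u,p_u\}$ is not in $\mathcal{S}_{\leq k}(T)$ and must not be counted. Thus the empty-sum convention silently enforces the maximum-degree constraint, and $T_s'\mapsto\{T_s',\,T_s'\cup\{u,p_u\}\}$ is a weight-preserving bijection between $\mathcal{S}_{\leq k}(T')$ and the two non-singleton groups of $\mathcal{S}_{\leq k}(T)$. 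Summing over all $T_s'\in\mathcal{S}_{\leq k}(T')$ gives $F_{\leq k}(T';f',g')$ for these two groups, and adding the singleton term $\sum_{j=0}^{k}f(u)_j$ recovers $F_{\leq k}(T;f,g)$, completing the argument.
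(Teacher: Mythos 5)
Your proof is correct and follows essentially the same route as the paper's: partition the subtrees of $T$ by their intersection with $\{u,v\}$, match the $u$-avoiding ones with subtrees of $T'$, and absorb the pairs $\{T_s',\,T_s'\cup\{u,p_u\}\}$ into the transformed weight $f'(v)_i$ via the re-indexing $\sum_{i=0}^{k-d'}f(v)_{i-1}=\sum_{i=0}^{k-d'-1}f(v)_i$, with the singleton $\{u\}$ supplying the additive term $\sum_{j=0}^{k}f(u)_j$. Your explicit treatment of the boundary case $d'=k$, where the empty sum correctly kills the forbidden extension, is a point the paper handles only implicitly through its empty-sum convention.
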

\begin{proof}
 We can partition the sets $\mathcal{S}_{\leq k}(T)$ and $\mathcal{S}_{\leq k}(T')$ as
\[
\mathcal{S}_{\leq k}(T)=\mathcal{T}_1\cup\mathcal{T}_{1'}\cup\mathcal{T}_2\cup\mathcal{T}_3
\]
and
\[
\mathcal{S}_{\leq k}(T')=\mathcal{T}_1'\cup\mathcal{T}_2'
\]
where
\begin{itemize}
\item $\mathcal{T}_1$ consists of subtrees of $\mathcal{S}_{\leq k}(T)$ that contain the vertex $v$, but not vertex $u$;
\item $\mathcal{T}_{1'}$ consists of subtrees of $\mathcal{S}_{\leq k}(T)$ that contain the edge $p_u =(u, v)$;
\item $\mathcal{T}_2$ consists of subtrees of $\mathcal{S}_{\leq k}(T)$ that contain neither $u$ nor $v$;
\item $\mathcal{T}_3$ consists of subtrees of $\mathcal{S}_{\leq k}(T)$ that contain the vertex $u$, but not $v$;
\item $\mathcal{T}_1'$ consists of subtrees of $\mathcal{S}_{\leq k}(T')$ that contain the vertex $v$;
\item $\mathcal{T}_2'$ consists of subtrees of $\mathcal{S}_{\leq k}(T')$ that do not contain vertex $v$.
\end{itemize}

It is easy to see that:

(i) Both the mapping $m_1:T_1\mapsto T_1'$ between $\mathcal{T}_1$ and $\mathcal{T}_1'$(ignore vertex weight of $v$); and mapping $m_2:T_2\mapsto T_2'$ between $\mathcal{T}_2$ and $\mathcal{T}_2'$ are natural bijections.

(ii) $\mathcal{T}_{1'}=\{T_1+p_u|T_1\in \mathcal{T}_1\}$ where $T_1+p_u$ is the tree obtained from $T_1$ by attaching a pendant edge $p_u=(u,v)$ at vertex $v$ of $T_1$.

(iii) $\mathcal{T}_{3}=\{u\}$.

Note that

\begin{equation}
deg_{T_1'}(v)=deg_{T_1}(v)
\label{equ:addt2pieweight}
\end{equation}

{\small
\begin{equation}
\begin{split}
\sum\limits_{T_1'\in \mathcal{T}'_1}\omega_k(T_1')=&\sum\limits_{T_1'\in \mathcal{T}'_1}\sum\limits_{i=0}^{k-deg_{T_1'}(v)}f'(v)_{i}\frac{\omega_k(T_1')}{\sum\limits_{i=0}^{k-deg_{T_1'}(v)}f'(v)_{i}}\\
&=\sum\limits_{T_1'\in \mathcal{T}'_1}\sum\limits_{i=0}^{k-deg_{T_1'}(v)}\big(f(v)_i+f(v)_{i-1}\sum\limits_{j=0}^{k-1}f(u)_jg(p_u)\big)\frac{\omega_k(T_1')}{\sum\limits_{i=0}^{k-deg_{T_1'}(v)}f'(v)_{i}}
\end{split}
\label{equ:t2pieweight}
\end{equation}}

From (i)-(iii), we have
\begin{equation}
\sum\limits_{T_{1'}\in \mathcal{T}_{1'}}\omega_k(T_{1'})=\sum\limits_{T_1\in \mathcal{T}_1}\frac{\omega_k(T_1)}{\sum\limits_{i=0}^{k-deg_{T_1}(v)}f(v)_{i}}\sum\limits_{i=0}^{k-deg_{T_1}(v)-1}f(v)_{i}\sum\limits_{j=0}^{k-1}f(u)_jg(p_u),
\label{equ:t1piet1weight}
\end{equation}

\begin{equation}
\sum\limits_{T_{2}'\in \mathcal{T}_{2}'}\omega_k(T_{2}')=\sum\limits_{T_{2}\in \mathcal{T}_{2}}\omega_k(T_{2}),
\label{equ:t2piet2weight}
\end{equation}
and
\begin{equation}
\sum\limits_{T_{3}\in \mathcal{T}_{3}}\omega_k(T_{3})=\sum\limits_{j=0}^{k}f(u)_j.
\label{equ:t3piet3weight}
\end{equation}
Immediately following equ.(\ref{equ:t1piet1weight}), we have
{\small
\begin{equation}
\begin{split}
\sum\limits_{T_1\in \mathcal{T}_1}\omega_k(T_1)+\sum\limits_{T_{1'}\in \mathcal{T}_{1'}}\omega_k(T_{1'})
=&\sum\limits_{T_1\in \mathcal{T}_1}\frac{\omega_k(T_1)}{\sum\limits_{i=0}^{k-deg_{T_1}(v)}f(v)_{i}}\Big(\sum\limits_{i=0}^{k-deg_{T_1}(v)}f(v)_{i}\\
&+\sum\limits_{i=0}^{k-deg_{T_1}(v)-1}f(v)_{i}\sum\limits_{j=0}^{k-1}f(u)_jg(p_u)\Big)\\
\end{split}
\label{equ:t2piet2combineweight1st}
\end{equation}}

Furthermore, following (i), the mapping $m_1:T_1\mapsto T_1'$ is a bijection between $\mathcal{T}_1$ and $\mathcal{T}_1'$. We now have
\begin{equation}
\frac{\omega_k(T_1)}{\sum\limits_{i=0}^{k-deg_{T_1}(v)}f(v)_{i}}=\frac{\omega_k(T_1')}{\sum\limits_{i=0}^{k-deg_{T_1'}(v)}f'(v)_{i}}
\label{equ:add8piet3weight}
\end{equation}
 Thus by equs. \eqref{equ:addt2pieweight}, (\ref{equ:t2pieweight}), \eqref{equ:t2piet2combineweight1st} and (\ref{equ:add8piet3weight})
\begin{equation}
\sum\limits_{T_1\in \mathcal{T}_1}\omega_k(T_1)+\sum\limits_{T_{1'}\in \mathcal{T}_{1'}}\omega_k(T_{1'})
=\sum\limits_{T_1'\in \mathcal{T}_1'}\omega_k(T_1') .
\label{equ:t2piet2combineequweight}
\end{equation}

Combining equs. \eqref{equ:t2piet2weight}, \eqref{equ:t3piet3weight}, \eqref{equ:t2piet2combineequweight}, and the definitions of $F_{\leq k}(T; f, g)$ and $F_{\leq k}(T'; f', g')$ we have

\begin{equation}
\begin{split}
F_{\leq k}(T; f, g)
&=\sum\limits_{T_1\in \mathcal{T}_1}\omega_k(T_1)+\sum\limits_{T_{1'}\in \mathcal{T}_{1'}}\omega_k(T_{1'})++\sum\limits_{T_2\in \mathcal{T}_2}\omega_k(T_2)+\sum\limits_{T_3\in \mathcal{T}_3}\omega_k(T_3)\\
&=\sum\limits_{T_1'\in \mathcal{T}'_1}\omega_k(T_1')+\sum\limits_{T_2'\in \mathcal{T}'_2}\omega_k(T_2')+\sum\limits_{T_3\in \mathcal{T}_3}\omega_k(T_3)\\
&=F_{\leq k}(T'; f', g')+\sum\limits_{j=0}^{k}f(u)_j
\end{split}
\label{equ:casefortheodd}
\end{equation}

The theorem thus follows.
\end{proof}

Through similar analysis, we also obtain the following.

\begin{theorem}\label{theo:subtreekcontainon}
Let $T =(V(T),E(T);f,g)$ be a weighted tree on $n\geq 2$ vertices, $u$ is a leaf vertex and $p_u =(u, v)$ is a pendant edge of $T$. Let $T' =(V(T'),E(T'); f',g')$ be a weighted tree defined as above. Then, for arbitrary vertex $v_i\neq u$, we have
$$F_{\leq k}(T;f,g;v_i)=F_{\leq k}(T';f',g';v_i) .$$
\end{theorem}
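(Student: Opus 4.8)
The plan is to mirror the proof of Theorem~\ref{theo:subtreecondes} almost verbatim, but to carry the extra constraint ``contains $v_i$'' through every block of the partition. Since every admissible subtree must now contain the fixed vertex $v_i$ and $v_i\neq u$, I would first restrict the decomposition of $\mathcal{S}_{\leq k}(T)$ to
$$\mathcal{S}_{\leq k}(T;v_i)=\mathcal{T}_1\cup\mathcal{T}_{1'}\cup\mathcal{T}_2,$$
where $\mathcal{T}_1$ now collects the subtrees containing both $v_i$ and $v$ but not $u$, $\mathcal{T}_{1'}$ those containing $v_i$ together with the edge $p_u$, and $\mathcal{T}_2$ those containing $v_i$ but neither $u$ nor $v$. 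The decisive observation is that the class $\mathcal{T}_3=\{u\}$ of the unconstrained proof disappears: its only member is the isolated vertex $u$, and because $v_i\neq u$ this single-vertex subtree does not contain $v_i$. Correspondingly, $\mathcal{S}_{\leq k}(T';v_i)=\mathcal{T}_1'\cup\mathcal{T}_2'$, where $\mathcal{T}_1'$ (resp.~$\mathcal{T}_2'$) consists of the subtrees of $T'$ containing $v_i$ that do (resp.~do not) contain $v$.

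Next I would verify that the three correspondences used before remain compatible with the new constraint. Because $v_i\neq u$, the vertex $v_i$ survives in $T'$ and is untouched by the transformation~\eqref{equ:genertreetrans}; hence the natural bijection $m_1:\mathcal{T}_1\to\mathcal{T}_1'$ (which fixes the vertex set and merely re-weights $v$) and the bijection $m_2:\mathcal{T}_2\to\mathcal{T}_2'$ (the identity on vertex sets) both preserve membership of $v_i$. Likewise, the relation $\mathcal{T}_{1'}=\{T_1+p_u\mid T_1\in\mathcal{T}_1\}$ respects the constraint, since attaching the pendant edge $p_u$ at $v$ only adjoins the vertex $u\neq v_i$. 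Thus imposing ``contains $v_i$'' simply restricts the index sets of all the sums appearing in the earlier argument; note that this is uniform whether or not $v_i=v$, in which latter case $\mathcal{T}_2$ and $\mathcal{T}_2'$ are merely empty.

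The crux is that the weight identities of Theorem~\ref{theo:subtreecondes} are \emph{termwise} and local: equations~\eqref{equ:addt2pieweight}--\eqref{equ:add8piet3weight} relate $\omega_k$ of a subtree before and after the transformation using only the local degree of $v$ and the weight vectors at $v$ and $u$, with no reference to the global ``contains $v_i$'' condition. Consequently each of those equalities persists after discarding the subtrees that miss $v_i$, yielding in particular the restricted analogue of~\eqref{equ:t2piet2combineequweight},
$$\sum_{T_1\in\mathcal{T}_1}\omega_k(T_1)+\sum_{T_{1'}\in\mathcal{T}_{1'}}\omega_k(T_{1'})=\sum_{T_1'\in\mathcal{T}_1'}\omega_k(T_1'),$$
together with $\sum_{T_2'\in\mathcal{T}_2'}\omega_k(T_2')=\sum_{T_2\in\mathcal{T}_2}\omega_k(T_2)$. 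Summing these two identities and invoking the definitions of the constrained generating functions then gives $F_{\leq k}(T;f,g;v_i)=F_{\leq k}(T';f',g';v_i)$, with no surviving $\sum_{j=0}^{k}f(u)_j$ correction term.

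I expect the only genuinely delicate point to be the bookkeeping of the second paragraph: confirming that each of the three set-correspondences transports the constraint ``contains $v_i$'' faithfully, which is precisely where the hypothesis $v_i\neq u$ is used (and which is also what forces the isolated-$u$ contribution to vanish). Everything else is a direct specialization of the already-established local weight identities, so no new computation is required.
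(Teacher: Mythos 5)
Your proposal is correct and is exactly the argument the paper intends: the paper gives no separate proof of Theorem~\ref{theo:subtreekcontainon}, stating only that it follows ``through similar analysis'' to Theorem~\ref{theo:subtreecondes}, and your write-up is precisely that analysis carried out --- restricting the four-block partition to subtrees containing $v_i$, observing that the hypothesis $v_i\neq u$ kills the class $\mathcal{T}_3=\{u\}$ (so the correction term $\sum_{j=0}^{k}f(u)_j$ disappears), and reusing the local weight identities unchanged.
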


\begin{theorem}\label{theo:subtreekcontainontwo}
Let $T =(V(T),E(T);f,g)$ be a weighted tree on $n\geq 2$ vertices, assume $u$ is a leaf vertex and $p_u =(u, v)$ is a pendant edge of $T$. Let $T' =(V(T'),E(T'); f',g')$ be a weighted tree defined as above. Then, for arbitrary two distinct vertices $v_i\neq u$, and $v_j\neq u$, we have
$$F_{\leq k}(T;f,g;v_i,v_j)=F_{\leq k}(T';f',g';v_i,v_j) .$$
\end{theorem}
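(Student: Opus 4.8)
The plan is to mirror the partition-and-bijection strategy used in the proof of Theorem~\ref{theo:subtreecondes}, but now track only those subtrees that contain the two prescribed vertices $v_i$ and $v_j$ (neither of which is the removed leaf $u$). First I would partition $\mathcal{S}_{\leq k}(T;v_i,v_j)$ according to how a subtree interacts with the pendant edge $p_u=(u,v)$. Since any subtree containing both $v_i$ and $v_j$ is a subtree of $T$ that already contains two fixed non-$u$ vertices, the relevant classes are: those that contain $v$ but not $u$, those that contain the edge $p_u$ (hence both $u$ and $v$), and those that contain neither $u$ nor $v$. Crucially, the class ``contains $u$ but not $v$'' is empty here, because any connected subtree containing $u$ must contain its only neighbor $v$, and moreover such a subtree could not reach $v_i,v_j\neq u$ while omitting $v$. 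Correspondingly, $\mathcal{S}_{\leq k}(T';v_i,v_j)$ splits into subtrees containing $v$ and subtrees not containing $v$.

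The key steps, in order, are as follows. First I would establish the same three structural facts (i)--(iii) as in Theorem~\ref{theo:subtreecondes}, restricted to subtrees containing $\{v_i,v_j\}$: the natural bijections $m_1,m_2$ between the ``contains $v$'' and ``omits $v$'' classes of $T$ and $T'$ respectively, the identification of the edge-class $\mathcal{T}_{1'}$ with $\{T_1+p_u : T_1\in\mathcal{T}_1\}$, and the observation that the singleton class $\mathcal{T}_3=\{u\}$ now makes no contribution since $u\notin\{v_i,v_j\}$. Second, I would import the weight identities \eqref{equ:t1piet1weight}, \eqref{equ:t2piet2weight}, together with the degree-preservation \eqref{equ:addt2pieweight} and the ratio identity \eqref{equ:add8piet3weight}: these depend only on the local structure at $v$ and on the bijections $m_1,m_2$, all of which restrict verbatim to the subfamily containing $v_i,v_j$. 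Third, I would assemble the sum $\sum_{T_1\in\mathcal{T}_1}\omega_k(T_1)+\sum_{T_{1'}\in\mathcal{T}_{1'}}\omega_k(T_{1'})=\sum_{T_1'\in\mathcal{T}_1'}\omega_k(T_1')$ exactly as in \eqref{equ:t2piet2combineequweight}, and combine with \eqref{equ:t2piet2weight} to conclude $F_{\leq k}(T;f,g;v_i,v_j)=F_{\leq k}(T';f',g';v_i,v_j)$, with no residual $\sum_{j=0}^k f(u)_j$ term because $\mathcal{T}_3$ is excluded.

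The main obstacle is verifying that the containment constraint $\{v_i,v_j\}\subseteq V(T_s)$ is compatible with each step of the partition and, in particular, that the bijections $m_1$ and $m_2$ genuinely restrict to constraint-preserving maps. Concretely, I must check that attaching or deleting the pendant edge $p_u$ at $v$ never affects whether $v_i$ or $v_j$ lies in the subtree, which holds because $v_i,v_j\neq u$ and the operation only toggles the presence of the leaf $u$; thus $T_1$ contains $\{v_i,v_j\}$ if and only if its image $T_1'$ (or $T_1+p_u$) does. Once this compatibility is confirmed, every weight manipulation is identical to that of Theorem~\ref{theo:subtreecondes}, so the essential content is bookkeeping rather than new analysis. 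I would note that the argument is insensitive to whether $v_i=v$, $v_j=v$, or both differ from $v$, and that it extends immediately to any fixed vertex set $V_S$ with $u\notin V_S$, which is precisely why Theorem~\ref{theo:subtreekcontainon} (the one-vertex case) follows from the same template.
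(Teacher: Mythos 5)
Your proposal is correct and takes essentially the same approach as the paper: the paper proves this theorem only by appealing to the ``similar analysis'' of Theorem~\ref{theo:subtreecondes}, and your argument is precisely that analysis restricted to subtrees containing $\{v_i,v_j\}$, with the key (correct) observations that the class containing $u$ but not $v$ is empty under the containment constraint and that the residual term $\sum_{j=0}^{k}f(u)_j$ therefore disappears.
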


\section{Our algorithms for subtree}
\label{sec:algoforsubtree}
From Theorem \ref{theo:subtreecondes}, we construct Algorithm \ref{Algorithm:subtreenumaxfixedk}
 of computing the generating function $F_{\leq k}(T;f,g)$ of subtrees with maximum degree $\leq k$ of weighted tree $T=(V(T),E(T);f,g)$.

\begin{algorithm}[!ht]
\caption{Generating function $F_{\leq k}(T;f,g)$ for enumerating subtrees with maximum degree $\leq k$ of weighted tree $T=(V(T),E(T);f,g)$}
\label{Algorithm:subtreenumaxfixedk}
\begin{algorithmic}[1]
\State Initialize with $(f(v)_{0},f(v)_{1},\dots ,f(v)_{k}$)($f(v)_{0}=y,f(v)_{1}=,\dots,=f(v)_{k}=0$) for each vertex $v\in V(T)$;

\State Let $T_{tmp}:=T$, and set $N_{T}=0$;\;
\If{$T_{tmp}$ is a single vertex tree $p$}

\State Update $N_T=\sum\limits_{j=0}^{k}f(p)_j$; \;

\Else
\While{$T_{tmp}$ has pendant vertex}
\State Choose a pendant vertex $u$ and let $e=(u,p)$ denote the pendant
\Statex \yy{0.4} edge;  \;

\For{$(i=1;i\leq k;i++)$}

\State Update $f(p)_{i}$ with $f(p)_{i}+f(p)_{i-1} g(e) \sum\limits_{j=0}^{k-1}f(u)_{j}$; \;
\State Update $N_T=N_T+f(u)_{i}$; \;

\EndFor

\State Update $N_T=N_T+f(u)_{0}$; \;
\State Eliminate vertex $u$ and edge $e$ and let $T_{tmp}:=T_{tmp}\backslash((u,p)\cup u)$; \;
\EndWhile
\State Update $N_T=N_T+\sum\limits_{j=0}^{k}f(p)_j$; \;

\EndIf
\State \Return $F(T;f,g)=N_T$.
\end{algorithmic}
\end{algorithm}

Similarly, from Theorem \ref{theo:subtreekcontainon} (resp.~Theorem \ref{theo:subtreekcontainontwo}), we have Algorithm~\ref{Algorithm:subtreenumaxfixedkcontavertex} (resp.~Algorithm~\ref{Algorithm:subtreenumaxfixedkconttwovertex}) of enumerating the subtrees containing a fixed vertex (resp. two distinct vertices) with maximum degree $\leq k$ of a tree.
\begin{algorithm}[!ht]
\caption{Generating function $F_{\leq k}(T;f,g;v_i)$ for enumerating subtrees containing a fixed vertex $v_i\in V(T)$ with maximum degree $\leq k$ of weighted tree $T=(V(T),E(T);f,g)$}
\label{Algorithm:subtreenumaxfixedkcontavertex}
\begin{algorithmic}[1]
\State Initialize with $(f(v)_{0},f(v)_{1},\dots ,f(v)_{k}$)($f(v)_{0}=y,f(v)_{1}=,\dots,=f(v)_{k}=0$) for each vertex $v\in V(T)$;

\State Let $T_{tmp}:=T$, and set $N_{T}=0$;\;
\If{$T_{tmp}$ is the single vertex tree $v_i$}

\State Update $N_T=\sum\limits_{j=0}^{k}f(v_i)_j$; \;

\Else
\While{$T_{tmp}$ has pendant vertex}
\State Choose a pendant vertex $u\neq v_i$ and denote $e=(u,p)$ the pendant
\Statex \yy{0.4} edge;  \;

\For{$(i=1;i\leq k;i++)$}

\State Update $f(p)_{i}$ with $f(p)_{i}+f(p)_{i-1} g(e) \sum\limits_{j=0}^{k-1}f(u)_{j}$; \;

\EndFor

\State Eliminate vertex $u$ and edge $e$ and let $T_{tmp}:=T_{tmp}\backslash((u,p)\cup u)$; \;
\EndWhile
\State Update $N_T=\sum\limits_{j=0}^{k}f(p)_j$; \;

\EndIf
\State \Return $F_{\leq k}(T;f,g;v_i)=N_T$.
\end{algorithmic}
\end{algorithm}

\begin{algorithm}[!ht]
\caption{Generating function $F_{\leq k}(T;f,g;v_i, v_j)$ for enumerating subtrees containing two distinct vertices $v_i, v_j\in V(T)(i\neq j)$ with maximum degree $\leq k$ of weighted tree $T=(V(T),E(T);f,g)$}
\label{Algorithm:subtreenumaxfixedkconttwovertex}
\begin{algorithmic}[1]
\State Initialize with $(f(v)_{0},f(v)_{1},\dots ,f(v)_{k}$)($f(v)_{0}=y,f(v)_{1}=,\dots,=f(v)_{k}=0$) for each vertex $v\in V(T)$;\;

\State Let $T_{tmp}:=T$, and set $N_{T}=0$;\;

\State  \Call{Contract1}{ };\;
\Statex \small{/* denote $P_{v_i v_j}=v_i(u_0)u_1u_2\dots u_{l-1}v_j(u_l)$ the unique path of length $l(\geq1)$ of $T$ connecting $v_i$ and $v_j$, where $v_i=u_0$ and $v_j=u_l$.*/}

\State Update $N_T=\sum\limits_{m=0}^{k-1}f(v_i)_mf(v_j)_m\prod\limits_{i=1}^{l-1}\sum\limits_{m=0}^{k-2}f(u_i)_m\prod\limits_{e\in E(P_{v_i v_j})}g(e)$; \;

\State \Return $F_{\leq k}(T;f,g;v_i, v_j)=N_T$.

\Procedure {Contract1}{ }

\While{$T_{temp}$ has pendant vertex that is different from $v_i$ and $v_j$}
\State Choose a pendant vertex $u$, which is different from $v_i$ and $v_j$, and denote \;

\Statex \yy{0.4} $e=(u,p)$ the pendant edge. \;

\For{$(i=1;i\leq k;i++)$}

\State Update $f(p)_{i}$ with $f(p)_{i}+f(p)_{i-1} g(e) \sum\limits_{j=0}^{k-1}f(u)_{j}$; \;

\EndFor

\State Eliminate vertex $u$ and edge $e$ and let $T_{tmp}:=T_{tmp}\backslash((u,p)\cup u)$; \;

\EndWhile

\EndProcedure

\end{algorithmic}
\end{algorithm}

\section{Our algorithms for BC-subtree}
\label{sec:algoforbcsubtree}
In order to solve the problem of enumerating BC-subtrees with maximum degree $\leq k$ of weighted tree $T=(V(T),E(T);f,g)$, we need firstly to solve the problem of computing $v_{o_j}^k$-subtrees ($j=0,1\dots,k$), $v_{e_j}^k$-subtrees ($j=0,1\dots,k$) of $T$, respectively.

Let $T =(V(T),E(T);f,g)$ be a weighted tree of order $n>1$ rooted at $v_i$ and let $u\neq v_i$ be a pendant vertex of $T$. Suppose $e=(u,v)$ is the pendant edge of $T$. We define a weighted tree $T' =(V(T'),E(T'); f',g')$ of order $n-1$ from $T$ as follows:
$V(T')=V(T)\backslash\{u\}$, $E(T')=E(T)\backslash\{e\}$,
\begin{align*}
f'(v_s)_o^i & =\begin{cases}
f(v)_o^0=1&\text{if } v_s=v \text{ and } i=0\\
f(v)_{o}^{i}+f(v)_{o}^{i-1} g(e) \sum\limits_{m=0}^{k-1}f(u)_{e}^{m}&\text{if } v_s=v\text{ and } 1\leq i\leq k\\
f(v_s)_o^i&\text{otherwise}.
\end{cases}
\end{align*}

\begin{align*}
f'(v_s)_e^i & =\begin{cases}
f(v)_{e}^{i}&\text{ if } v_s=v \text{ and } i=0\\
f(v)_{e}^{i}+f(v)_{e}^{i-1}g(e) \sum\limits_{m=1}^{k-1}f(u)_{o}^{m}&\text{ if } v_s=v \text{ and } 1 \leq i\leq k\\
f(v_s)_e^i&\text{otherwise}.
\end{cases}
\end{align*}

 for any $i=0,1,\dots,k$ and $v_s\in V(T')$ and $g'(e)=g(e)$ for any $e\in E(T')$.
\begin{theorem}
With the above notations, we have
\[
F_{\leq k,o_j}(T;f,g;v_{i})=F_{\leq k,o_j}(T'; f', g';v_i);
\]
\[
F_{\leq k,e_j}(T;f,g;v_{i})=F_{\leq k,e_j}(T'; f', g';v_i).
\]
\label{theorem:oddeventransbcmaxk}
\end{theorem}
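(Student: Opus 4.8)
The plan is to mirror the partition-and-bijection argument of Theorem~\ref{theo:subtreecondes}, but now carrying the parity bookkeeping demanded by the $o_j$- and $e_j$-weights. I would first treat the identity for $F_{\leq k,o_j}$ in full and then observe that the one for $F_{\leq k,e_j}$ follows by the symmetric argument with the roles of the odd and even weights of $u$ interchanged. Since $u$ is a pendant vertex distinct from the root $v_i$ whose only neighbour is $v$, every subtree of $T$ containing $v_i$ and $u$ necessarily contains $v$ and the edge $e=(u,v)$, and in such a subtree $u$ is always a (skeleton) leaf. This lets me partition $\mathcal{S}_{\leq k}(T;v_i)$ into $\mathcal{P}$ (subtrees avoiding $v$), $\mathcal{Q}$ (subtrees containing $v$ but not $u$), and $\mathcal{R}$ (subtrees containing $u$), and to partition $\mathcal{S}_{\leq k}(T';v_i)$ into $\mathcal{P}'$ (avoiding $v$) and $\mathcal{Q}'$ (containing $v$).

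The contraction map $\phi$ that deletes $u$ and $e$ when present gives a weight-preserving bijection $\mathcal{P}\leftrightarrow\mathcal{P}'$ (a subtree avoiding $v$ never meets the modified weight, and $f'=f$ off $v$), identifies $\mathcal{Q}$ with $\mathcal{Q}'$ as subtrees, and sends each member of $\mathcal{R}$ to its image $T_1'\in\mathcal{Q}'$; the fibre of $\phi$ over a fixed $T_1'\in\mathcal{Q}'$ inside $\mathcal{R}$ is the single subtree $T_1'+e+u$. Thus, after the trivial $\mathcal{P}$-identity, the whole theorem reduces to the local identity
\[
\omega_{k,o_j}^{v_i}(T_1';f')=\omega_{k,o_j}^{v_i}(T_1';f)+\omega_{k,o_j}^{v_i}(T_1'+e+u;f)
\qquad(T_1'\in\mathcal{Q}'),
\]
whose summation over $\mathcal{Q}'$, together with the $\mathcal{P}$-contribution, recovers $F_{\leq k,o_j}(T;f,g;v_i)=F_{\leq k,o_j}(T';f',g';v_i)$ exactly as in equation~\eqref{equ:casefortheodd}.

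To establish the local identity I would split on the parity of $d_{T_1'}(v_i,v)$, since this decides which factor of the weight $v$ enters through. If $v$ lies at even distance from $v_i$ (the case $v=v_i$ included, where $v$ enters through the root factor $a_2$ instead of $a_4$ or $a_5$), then $v$ contributes a factor built from $f'(v)_o$, and I substitute $f'(v)_o^{\,i}=f(v)_o^{\,i}+f(v)_o^{\,i-1}g(e)\sum_{m=0}^{k-1}f(u)_e^{\,m}$: the first summand reproduces $\omega_{k,o_j}^{v_i}(T_1';f)$, while the second, in which $\deg(v)$ is raised by one and $u$ enters at odd distance carrying its even weight $f(u)_e$, reproduces $\omega_{k,o_j}^{v_i}(T_1'+e+u;f)$. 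If instead $v$ lies at odd distance from $v_i$, then $v$ enters through the factor $a_3$ built from $f'(v)_e$, and I substitute the corresponding rule for $f'(v)_e^{\,i}$; here the truncation $\sum_{m=1}^{k-1}f(u)_o^{\,m}$ forces the attached $u$ to be non-leaf, which is precisely what the $o_j$-weight demands, since a leaf $u$ at even distance would violate the parity constraint and is therefore treated by the $a_5$-type factor in $\omega_{k,o_j}^{v_i}(T_1'+e+u;f)$.

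The main obstacle will be the careful degree- and leaf-bookkeeping inside this local identity. One must check that the index shift $f(v)_\bullet^{\,i-1}$ records the unit increase of $\deg(v)$ upon attaching $u$, that the truncations at $k-1$ keep the total degree of $u$ within the $\leq k$ budget, and---most delicately---that the choice between $\sum_{m=0}^{k-1}$ and $\sum_{m=1}^{k-1}$ (equivalently between an $a_4$- and an $a_5$-type factor) encodes exactly whether the newly attached $u$ may remain a leaf, and that the reindexing $\sum_{i\geq1}f(v)_o^{\,i-1}=\sum_{i\geq0}f(v)_o^{\,i}$ correctly converts the $a_5$-factor of a skeleton-leaf $v$ into the $a_4$-factor of the now-internal $v$. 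These are the same bookkeeping points underlying equations~\eqref{equ:t2pieweight}--\eqref{equ:add8piet3weight}, now duplicated across the two distance parities and linked through the two transformation rules for $f'(v)_o$ and $f'(v)_e$; once they are verified, the $e_j$-identity is obtained verbatim with odd and even interchanged.
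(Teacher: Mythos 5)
Your proposal is correct and follows essentially the same route as the paper's proof: the same partition of $\mathcal{S}_{\leq k}(T;v_i)$ and $\mathcal{S}_{\leq k}(T';v_i)$ (with the class of subtrees containing $u$ but not $v$ empty), the same contraction bijections, the same case split on the parity of $d(v_i,v)$, and the same substitution of the transformation rules for $f'(v)_o$ and $f'(v)_e$, including the leaf/non-leaf index-shift for $v$ that the paper handles by splitting $\mathcal{T}_1$ into $\mathcal{T}_{1,1}$ and $\mathcal{T}_{1,2}$. Your only deviations are organizational---you package the paper's summed identities as a per-subtree local identity, and you explicitly treat the case $v=v_i$ via the root factor $a_2$, a boundary case the paper leaves implicit---neither of which changes the substance of the argument.
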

\begin{proof}
We consider two cases.
\begin{itemize}
\item If $d_T(v_i,v)$ is odd, we partition the sets $\mathcal{S}_{\leq k}(T;v_i)$ and $\mathcal{S}_{\leq k}(T';v_i)$ as
\[
\mathcal{S}_{\leq k}(T;v_i)=\mathcal{T}_1\cup\mathcal{T}_{1'}\cup\mathcal{T}_2\cup\mathcal{T}_3
\]
and
\[
\mathcal{S}_{\leq k}(T';v_i)=\mathcal{T}_1'\cup\mathcal{T}_2'
\]
where
\begin{itemize}
\item $\mathcal{T}_1$ consists of subtrees of $\mathcal{S}_{\leq k}(T;v_i)$ that contain the vertex $v$, but not vertex $u$;
\item $\mathcal{T}_{1'}$ consists of subtrees of $\mathcal{S}_{\leq k}(T;v_i)$ that contain the edge $p_u =(u, v)$;
\item $\mathcal{T}_2$ consists of subtrees of $\mathcal{S}_{\leq k}(T;v_i)$ that contain neither $u$ nor $v$;
\item $\mathcal{T}_3$ consists of subtrees of $\mathcal{S}_{\leq k}(T;v_i)$ that contain the vertex $u$, but not $v$;
\item $\mathcal{T}_1'$ consists of subtrees of $\mathcal{S}_{\leq k}(T';v_i)$ that contain the vertex $v$;
\item $\mathcal{T}_2'$ consists of subtrees of $\mathcal{S}_{\leq k}(T';v_i)$ that do not contain vertex $v$.
\end{itemize}

Similar to before, we claim that

(i) Both the mapping $m_1:T_1\mapsto T_1'$ between $\mathcal{T}_1$ and $\mathcal{T}_1'$(ignore vertex weight of $v$); and mapping $m_2:T_2\mapsto T_2'$ between $\mathcal{T}_2$ and $\mathcal{T}_2'$ are natural bijections.

(ii) $\mathcal{T}_{1'}=\{T_1+p_u|T_1\in \mathcal{T}_1\}$ where $T_1+p_u$ is the tree obtained from $T_1$ by attaching a pendant edge $p_u=(u,v)$ at vertex $v$ of $T_1$.

(iii) $\mathcal{T}_{3}=\emptyset$.

Note that

\begin{equation}
deg_{T_1'}(v)=deg_{T_1}(v)
\label{equ:bcaddt2pieweight}
\end{equation}

{\small
\begin{equation}
\begin{split}
\sum\limits_{T_1'\in \mathcal{T}'_1}\omega_{k,o_j}^v(T_1')=&\sum\limits_{T_1'\in \mathcal{T}'_1}\sum\limits_{i=0}^{k-deg_{T_1'}(v)}f'(v)_e^{i}\frac{\omega_{k,o_j}^v(T_1')}{\sum\limits_{i=0}^{k-deg_{T_1'}(v)}f'(v)_{e}^{i}}\\
&=\sum\limits_{T_1'\in \mathcal{T}'_1}\sum\limits_{i=0}^{k-deg_{T_1'}(v)}\big(f(v)_e^i+f(v)_e^{i-1}\sum\limits_{j=1}^{k-1}f(u)_0^jg(p_u)\big)\frac{\omega_{k,o_j}^v(T_1')}{\sum\limits_{i=0}^{k-deg_{T_1'}(v)}f'(v)_e^{i}}
\end{split}
\label{equ:bct2pieweight}
\end{equation}}
By (i)-(iii), we have
\begin{equation}
\sum\limits_{T_{1'}\in \mathcal{T}_{1'}}\omega_{k,o_j}^v(T_{1'})=\sum\limits_{T_1\in \mathcal{T}_1}\frac{\omega_{k,o_j}^v(T_1)}{\sum\limits_{i=0}^{k-deg_{T_1}(v)}f(v)_e^{i}}\sum\limits_{i=0}^{k-deg_{T_1}(v)-1}f(v)_e^{i}\sum\limits_{j=1}^{k-1}f(u)_o^jg(p_u),
\label{equ:bct1piet1weight}
\end{equation}

\begin{equation}
\sum\limits_{T_{2}'\in \mathcal{T}_{2}'}\omega_{k,o_j}^v(T_{2}')=\sum\limits_{T_{2}\in \mathcal{T}_{2}}\omega_{k,o_j}^v(T_{2}),
\label{equ:bct2piet2weight}
\end{equation}
and
\begin{equation}
\sum\limits_{T_{3}\in \mathcal{T}_{3}}\omega_{k,o_j}^v(T_{3})=0.
\label{equ:bct3piet3weight}
\end{equation}
Immediately following (\ref{equ:bct1piet1weight}), we have
{\small
\begin{equation}
\begin{split}
\sum\limits_{T_1\in \mathcal{T}_1}\omega_{k,o_j}^v(T_1)+\sum\limits_{T_{1'}\in \mathcal{T}_{1'}}\omega_{k,o_j}^v(T_{1'})
=&\sum\limits_{T_1\in \mathcal{T}_1}\frac{\omega_{k,o_j}^v(T_1)}{\sum\limits_{i=0}^{k-deg_{T_1}(v)}f(v)_e^{i}}\Big(\sum\limits_{i=0}^{k-deg_{T_1}(v)}f(v)_e^{i}\\
&+\sum\limits_{i=0}^{k-deg_{T_1}(v)-1}f(v)_e^{i}\sum\limits_{j=1}^{k-1}f(u)_o^jg(p_u)\Big)
\end{split}
\label{equ:t2piet2combineweight}
\end{equation}}

Furthermore, following (i), the mapping $m_1:T_1\mapsto T_1'$ is a bijection between $\mathcal{T}_1$ and $\mathcal{T}_1'$. We now have
$$ \frac{\omega_{k,o_j}^v(T_1)}{\sum\limits_{i=0}^{k-deg_{T_1}(v)}f(v)_e^{i}}=\frac{\omega_{k,o_j}^v(T_1')}{\sum\limits_{i=0}^{k-deg_{T_1'}(v)}f'(v)_e^{i}} $$
Thus by (\ref{equ:bct2pieweight}) and (\ref{equ:t2piet2combineweight}),
\begin{equation}
\sum\limits_{T_1\in \mathcal{T}_1}\omega_{k,o_j}^v(T_1)+\sum\limits_{T_{1'}\in \mathcal{T}_{1'}}\omega_{k,o_j}^v(T_{1'})
=\sum\limits_{T_1'\in \mathcal{T}'_1}\omega_{k,o_j}^v(T_1').
\label{equ:bct2piet2combineequweight}
\end{equation}

From (\ref{equ:bct2piet2weight}), (\ref{equ:bct3piet3weight}), (\ref{equ:bct2piet2combineequweight}) and according to the definitions of $F_{\leq k,o_j}(T;f,g;v_{i})$ and $F_{\leq k,o_j}(T'; f', g';v_i)$ we have

{\small \begin{equation}
\begin{split}
F_{\leq k,o_j}(T;f,g;v_{i})\nonumber
&=\sum\limits_{T_1\in \mathcal{T}_1}\omega_{k,o_j}^v(T_1)
+\sum\limits_{T_{1'}\in \mathcal{T}_{1'}}\omega_{k,o_j}^v(T_{1'})+\sum\limits_{T_2\in \mathcal{T}_2}\omega_{k,o_j}^v(T_2)+\sum\limits_{T_3\in \mathcal{T}_3}\omega_{k,o_j}^v(T_3)\nonumber\\
&=\sum\limits_{T_1'\in \mathcal{T}'_1}\omega_{k,o_j}^v(T_1')+\sum\limits_{T_2'\in \mathcal{T}'_2}\omega_{k,o_j}^v(T_2')\nonumber\\
&=F_{\leq k,o_j}(T'; f', g';v_i).\nonumber
\end{split}
\label{equ:casefortheodd}
\end{equation}}

\item If $d_T(v_i,v)$ is even, we partition the sets $\mathcal{S}_{\leq k}(T;v_i)$ and $\mathcal{S}_{\leq k}(T';v_i)$ as
\[
\mathcal{S}_{\leq k}(T;v_i)=\mathcal{T}_{1,1}\cup\mathcal{T}_{1,2}\cup\mathcal{T}_{1'}\cup\mathcal{T}_2\cup\mathcal{T}_3
\]
and
\[
\mathcal{S}_{\leq k}(T';v_i)=\mathcal{T}_{1,1}'\cup\mathcal{T}_{1,2}'\cup\mathcal{T}_2'
\]
where
\begin{itemize}
\item $\mathcal{T}_{1,1}$ consists of subtrees of $\mathcal{S}_{\leq k}(T;v_i)$ that contain the leaf vertex $v$, but not vertex $u$;
\item $\mathcal{T}_{1,2}$ consists of subtrees of $\mathcal{S}_{\leq k}(T;v_i)$ that contain the nonleaf vertex $v$, but not vertex $u$;
\item $\mathcal{T}_{1'}$ consists of subtrees of $\mathcal{S}_{\leq k}(T;v_i)$ that contain the edge $p_u =(u, v)$;
\item $\mathcal{T}_2$ consists of subtrees of $\mathcal{S}_{\leq k}(T;v_i)$ that contain neither $u$ nor $v$;
\item $\mathcal{T}_3$ consists of subtrees of $\mathcal{S}_{\leq k}(T;v_i)$ that contain the vertex $u$, but not $v$;
\item $\mathcal{T}_{1,1}'$ consists of subtrees of $\mathcal{S}_{\leq k}(T';v_i)$ that contain the leaf vertex $v$;
\item $\mathcal{T}_{1,2}'$ consists of subtrees of $\mathcal{S}_{\leq k}(T';v_i)$ that contain the nonleaf vertex $v$;
\item $\mathcal{T}_2'$ consists of subtrees of $\mathcal{S}_{\leq k}(T';v_i)$ that do not contain vertex $v$.
\end{itemize}

Similarly, we have

(i) The mapping $m_1:T_{1,1}\mapsto T_{1,1}'$ between $\mathcal{T}_{1,1}$ and $\mathcal{T}_{1,1}'$(ignore vertex weight of $v$); the mapping $m_2:T_{1,2}\mapsto T_{1,2}'$ between $\mathcal{T}_{1,2}$ and $\mathcal{T}_{1,2}'$(ignore vertex weight of $v$), and mapping $m_3:T_2\mapsto T_2'$ between $\mathcal{T}_2$ and $\mathcal{T}_2'$ are natural bijections.

(ii) $\mathcal{T}_{1'}=\{T_{1,1}+p_u|T_{1,1}\in \mathcal{T}_{1,1}\}\cup \{T_{1,2}+p_u|T_{1,2}\in \mathcal{T}_{1,2}\}$ where $T_{1,1}+p_u$ (resp. $T_{1,2}+p_u$) is the tree obtained from $T_1$ by attaching a pendant edge $p_u=(u,v)$ at vertex $v$ of $T_{1,1}$ (resp. $T_{1,2}$).

(iii) $\mathcal{T}_{3}=\emptyset$.

Note that
\begin{equation}
deg_{T_{1,1}'}(v)=deg_{T_{1,1}}(v)
\label{equ:bcevenaddt2pieweight1}
\end{equation}
\begin{equation}
deg_{T_{1,2}'}(v)=deg_{T_{1,2}}(v)
\label{equ:bcevenaddt2pieweight2}
\end{equation}

{\small
\begin{equation}
\begin{split}
\sum\limits_{T_{1,1}'\in \mathcal{T}'_{1,1}}\omega_{k,o_j}^v(T_{1,1}')=&\sum\limits_{T_{1,1}'\in \mathcal{T}'_{1,1}}\sum\limits_{i=1}^{k-deg_{T_{1,1}'}(v)}f'(v)_o^{i}\frac{\omega_{k,o_j}^v(T_{1,1}')}{\sum\limits_{i=1}^{k-deg_{T_{1,1}'}(v)}f'(v)_{o}^{i}}\\
=&\sum\limits_{T_{1,1}'\in \mathcal{T}'_{1,1}}\sum\limits_{i=1}^{k-deg_{T_{1,1}'}(v)}\big(f(v)_o^i+f(v)_o^{i-1}\sum\limits_{j=0}^{k-1}f(u)_0^jg(p_u)\big)\frac{\omega_{k,o_j}^v(T_{1,1}')}{\sum\limits_{i=1}^{k-deg_{T_{1,1}'}(v)}f'(v)_o^{i}}
\end{split}
\label{equ:bct2pieweighteven}
\end{equation}}

{\small
\begin{equation}
\begin{split}
\sum\limits_{T_{1,2}'\in \mathcal{T}'_{1,2}}\omega_{k,o_j}^v(T_{1,2}')=&\sum\limits_{T_{1,2}'\in \mathcal{T}'_{1,2}}\sum\limits_{i=0}^{k-deg_{T_{1,2}'}(v)}f'(v)_o^{i}\frac{\omega_{k,o_j}^v(T_{1,2}')}{\sum\limits_{i=0}^{k-deg_{T_{1,2}'}(v)}f'(v)_{o}^{i}}\\
=&\sum\limits_{T_{1,2}'\in \mathcal{T}'_{1,2}}\sum\limits_{i=0}^{k-deg_{T_{1,2}'}(v)}\big(f(v)_o^i+f(v)_o^{i-1}\sum\limits_{j=0}^{k-1}f(u)_0^jg(p_u)\big)\frac{\omega_{k,o_j}^v(T_{1,2}')}{\sum\limits_{i=0}^{k-deg_{T_{1,2}'}(v)}f'(v)_o^{i}}
\end{split}
\label{equ:bct2pieweighteven2}
\end{equation}}

By (i)-(iii), we have
\begin{equation}
\begin{split}
&\sum\limits_{T_{1,1}\in \mathcal{T}_{1,1}}\omega_{k,o_j}^v(T_{1,1})+\sum\limits_{T_{1,2}\in \mathcal{T}_{1,2}}\omega_{k,o_j}^v(T_{1,2})+\sum\limits_{T_{1'}\in \mathcal{T}_{1'}}\omega_{k,o_j}^v(T_{1'})\\
&=\sum\limits_{T_{1,1}\in \mathcal{T}_{1,1}}\frac{\omega_{k,o_j}^v(T_{1,1})}{\sum\limits_{i=1}^{k-deg_{T_{1,1}}(v)}f(v)_o^{i}}\bigg[\sum\limits_{i=1}^{k-deg_{T_{1,1}}(v)}f(v)_o^{i}+\sum\limits_{i=1}^{k-deg_{T_{1,1}}(v)-1}f(v)_o^{i}\sum\limits_{j=0}^{k-1}f(u)_e^jg(p_u)\bigg],
\\
&+\sum\limits_{T_{1,2}\in \mathcal{T}_{1,2}}\frac{\omega_{k,o_j}^v(T_{1,2})}{\sum\limits_{i=0}^{k-deg_{T_{1,2}}(v)}f(v)_o^{i}}\bigg[\sum\limits_{i=0}^{k-deg_{T_{1,2}}(v)}f(v)_o^{i}+\sum\limits_{i=0}^{k-deg_{T_{1,2}}(v)-1}f(v)_o^{i}\sum\limits_{j=0}^{k-1}f(u)_e^jg(p_u)\bigg],
\end{split}
\label{equ:bct1piet1weighteven222}
\end{equation}

\begin{equation}
\sum\limits_{T_{2}'\in \mathcal{T}_{2}'}\omega_{k,o_j}^v(T_{2}')=\sum\limits_{T_{2}\in \mathcal{T}_{2}}\omega_{k,o_j}^v(T_{2}),
\label{equ:bct2piet2weighteven222}
\end{equation}
and
\begin{equation}
\sum\limits_{T_{3}\in \mathcal{T}_{3}}\omega_{k,o_j}^v(T_{3})=0.
\label{equ:bct3piet3weighteven222}
\end{equation}

Similarly, from (i), with equs. \eqref{equ:bcevenaddt2pieweight1},  \eqref{equ:bcevenaddt2pieweight2}, we have
\begin{equation}
\frac{\omega_{k,o_j}^v(T_{1,1}')}{\sum\limits_{i=1}^{k-deg_{T_{1,1}'}(v)}f'(v)_o^{i}}=\frac{\omega_{k,o_j}^v(T_{1,1})}{\sum\limits_{i=1}^{k-deg_{T_{1,1}}(v)}f(v)_o^{i}}
\label{equ:add8piet3weighteven}
\end{equation}
and
\begin{equation}
\frac{\omega_{k,o_j}^v(T_{1,2}')}{\sum\limits_{i=1}^{k-deg_{T_{1,2}'}(v)}f'(v)_o^{i}}=\frac{\omega_{k,o_j}^v(T_{1,2})}{\sum\limits_{i=1}^{k-deg_{T_{1,2}}(v)}f(v)_o^{i}}
\label{equ:add8piet3weighteven2}
\end{equation}
Thus by equs. (\ref{equ:bct2pieweighteven}), \eqref{equ:bct2pieweighteven2}, (\ref{equ:bct1piet1weighteven222}), \eqref{equ:add8piet3weighteven} and \eqref{equ:add8piet3weighteven2}, we have
\begin{equation}
\begin{split}
\sum\limits_{T_{1,1}\in \mathcal{T}_{1,1}}\omega_{k,o_j}^v(T_{1,1})+\sum\limits_{T_{1,2}\in \mathcal{T}_{1,2}}\omega_{k,o_j}^v(T_{1,2})+\sum\limits_{T_{1'}\in \mathcal{T}_{1'}}\omega_{k,o_j}^v(T_{1'})
\\=\sum\limits_{T_{1,1}'\in \mathcal{T}'_{1,1}}\omega_{k,o_j}^v(T_{1,1}')+\sum\limits_{T_{1,2}'\in \mathcal{T}'_{1,2}}\omega_{k,o_j}^v(T_{1,2}').
\end{split}
\label{equ:bct2piet2combineequweighteven33}
\end{equation}

Similarly, from (\ref{equ:bct2piet2weighteven222}), (\ref{equ:bct3piet3weighteven222}), (\ref{equ:bct2piet2combineequweighteven33}) and according to the definitions of $F_{\leq k,o_j}(T;f,g;v_{i})$ and $F_{\leq k,o_j}(T'; f', g';v_i)$ we have
{\small \begin{equation}
\begin{split}
F_{\leq k,o_j}(T;f,g;v_{i})\nonumber
=&\sum\limits_{T_{1,1}\in \mathcal{T}_{1,1}}\omega_{k,o_j}^v(T_{1,1})
\sum\limits_{T_{1,2}\in \mathcal{T}_{1,2}}\omega_{k,o_j}^v(T_{1,2})+\sum\limits_{T_{1'}\in \mathcal{T}_{1'}}\omega_{k,o_j}^v(T_{1'})+\sum\limits_{T_2\in \mathcal{T}_2}\omega_{k,o_j}^v(T_2)\\
&+\sum\limits_{T_3\in \mathcal{T}_3}\omega_{k,o_j}^v(T_3)\nonumber\\
&=\sum\limits_{T_{1,1}'\in \mathcal{T}'_{1,1}}\omega_{k,o_j}^v(T_{1,1}')+\sum\limits_{T_{1,2}'\in \mathcal{T}'_{1,2}}\omega_{k,o_j}^v(T_{1,2}')+\sum\limits_{T_2'\in \mathcal{T}'_2}\omega_{k,o_j}^v(T_2')\nonumber\\
&=F_{\leq k,o_j}(T'; f', g';v_i).\nonumber
\end{split}
\label{equ:casefortheeven}
\end{equation}}
\end{itemize}
We skip the technical details for the case of $F_{\leq k,e_j}(T;f,g;v_{i})=F_{\leq k,e_j}(T'; f', g';v_i)$.

The theorem then follows.

\end{proof}

From Theorem \ref{theorem:oddeventransbcmaxk}, we have the Algorithm \ref{Algorithm:degreecontibcsubtreecontfixedu} of enumerating $v_{o_j}^k$-subtrees ($j=0,1\dots,k$), $v_{e_j}^k$-subtrees ($j=0,1\dots,k$) of $T$, respectively.

\begin{algorithm}[!ht]
\caption{Generating function $F_{\leq k,o_j}(T;f,g;v_{i}),F_{\leq k,e_j}(T;f,g;v_{i})$ ($j=0,1\dots,k$) for a fixed vertex $v_{i}$.}
\label{Algorithm:degreecontibcsubtreecontfixedu}
\begin{algorithmic}[1]
\State Initialize with $(f(v_s)_{o}^{0},f(v_s)_{o}^{1},\dots ,f(v_s)_{o}^{k};f(v_s)_{e}^{0},f(v_s)_{e}^{1},\dots,f(v_s)_{e}^{k}$)($f(v_s)_{o}^{0}=1, f(v_s)_{e}^{0}=y,f(v_s)_{o}^{1}=,\dots,=f(v_s)_{o}^{k}=f(v_s)_{e}^{1}=,\dots,=f(v_s)_{e}^{k}=0$) for each vertex $v_s \in V(T)$;

\If{$v_i$ is a single vertex tree}

\State Set $p:=v_i$; \;

\Else
\State \Call{Contract2}{ };\;

\EndIf

\For{$(j=0;j\leq k;j++)$}

\State \Return $F_{\leq k,o_j}(T;f,g;v_{i})=f(p)_{o}^{j}$ and $F_{\leq k,e_j}(T;f,g;v_{i})=f(p)_{e}^{j}$;
\EndFor

\Procedure {Contract2}{ }

\While{$T$ is not a single vertex tree}
\State Choose a pendant vertex $u'\neq v_i$ and denote $e=(u',p)$ the pendant
\Statex \yy{0.4} edge;  \;
\For{$(j=1;j\leq k;j++)$}

\State  Update $f(p)_{o}^{j}$ with $f(p)_{o}^{j}+f(p)_{o}^{j-1} g(e) \sum\limits_{m=0}^{k-1}f(u')_{e}^{m}$;\;

\State Update $f(p)_{e}^{j}$ with $f(p)_{e}^{j}+f(p)_{e}^{j-1}g(e) \sum\limits_{m=1}^{k-1}f(u')_{o}^{m}$;\;
\EndFor
\State Eliminate vertex $u'$ and edge $e$ and let $T:=T\backslash((u',p)\cup u')$;\;

\EndWhile

\EndProcedure

\end{algorithmic}
\end{algorithm}

Following the same conditions and notations, essentially the same argument yields the following theorems:
\begin{theorem} Given two distinct vertices $v_i$ and $v_j$, say $v_i\neq u$, $v_j\neq u$, we have
\[
F_{{BC}_{\leq k}}(T; f, g;v_i,v_j)=F_{{BC}_{\leq k}}(T'; f', g';v_i,v_j).
\]
\label{theorem:bcnumcontainfixedtwovertextheor}
\end{theorem}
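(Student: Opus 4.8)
The plan is to reproduce the partition-and-bijection scheme from the proof of Theorem~\ref{theorem:oddeventransbcmaxk}, now applied to the BC-weight $\omega_{bc}^k$ and restricted to BC-subtrees that contain both prescribed vertices. Because $v_i\neq u$, $v_j\neq u$, and the passage $T\mapsto T'$ leaves every vertex and edge other than the pendant $u$ and its incident edge $e=(u,v)$ untouched, the constraint $\{v_i,v_j\}\subseteq V(T_2)$ is respected by all the maps involved. First I would split $\mathcal{S}_{{BC}_{\leq k}}(T;v_i,v_j)$ into the BC-subtrees that avoid $u$ and those that contain $u$, and split $\mathcal{S}_{{BC}_{\leq k}}(T';v_i,v_j)$ according to whether $v$ is present (and, where $v$ can be a leaf, whether it is one).

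The decisive structural point is that the class ``contains $u$ but not $v$'' is empty: any BC-subtree holding both $v_i$ and $v_j$ spans at least the whole path $P_{v_i v_j}$, so it is never the isolated vertex $\{u\}$ and never reaches $u$ except through its unique neighbour $v$. Thus, exactly as Theorem~\ref{theo:subtreekcontainontwo} is a clean identity whereas the unconditioned Theorem~\ref{theo:subtreecondes} carries the extra summand $\sum_{j}f(u)_j$, here the analogue of that singleton contribution disappears and the asserted equality has no correction term.

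Next I would match weights, grouping the BC-subtrees by their intersection with $\{u,v\}$. Those containing neither $u$ nor $v$ map identically to BC-subtrees of $T'$ and keep their $\omega_{bc}^k$-weight verbatim, since that weight does not involve $v$. The remaining admissible BC-subtrees all contain $v$; I split these by whether they also contain $u$. If such a BC-subtree contains $u$ then $u$ is a pendant leaf, and as all leaves of a BC-subtree lie at mutually even distance they occupy one side of the bipartition, forcing $u\in B_e(T_2)$ and $v\in B_o(T_2)$. Both the $u$-free and the $u$-bearing $v$-subtrees are sent (the former identically, the latter by deleting $u$ and $e$) to the BC-subtrees of $T'$ that contain $v$, and the summed weight of each matched family is reproduced by the update rules
\[
f'(v)_o^i=f(v)_o^i+f(v)_o^{i-1}g(e)\sum_{m=0}^{k-1}f(u)_e^m,\qquad f'(v)_e^i=f(v)_e^i+f(v)_e^{i-1}g(e)\sum_{m=1}^{k-1}f(u)_o^m,
\]
whose first summand is the $u$-free contribution $f(v)_\bullet^i$ and whose second summand attaches $u$ together with a subtree rooted at $u$ of the parity that keeps every new leaf at even distance from the reference leaf.

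The main obstacle is to establish the combined identity $\sum_{T_1\in\mathcal{T}_1}\omega_{bc}^k(T_1)+\sum_{T_{1'}\in\mathcal{T}_{1'}}\omega_{bc}^k(T_{1'})=\sum_{T_1'\in\mathcal{T}_1'}\omega_{bc}^k(T_1')$ simultaneously for the two product terms of $\omega_{bc}^k(T_2)=(b_1b_2b_3+b_4b_5b_6)b_7$. When $u$ is attached, $v\in B_o$ is a non-leaf, so in the first term the $f(v)_o$-factor $b_3$ pairs with the $f(u)_e$-factor $b_1$ to realise exactly the $f'(v)_o$ rule, while in the second term the $f(v)_e$-factor $b_6$ pairs with the $f(u)_o$-factor $b_4$ to realise the $f'(v)_e$ rule; the $u$-free subtrees supply the plain $f(v)_\bullet^i$ summands. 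The delicate bookkeeping is reconciling the lower summation limits $m\ge0$ versus $m\ge1$ that separate non-leaf from leaf contributions in $b_2,b_3$ and $b_4,b_5$, which is precisely why one splits, as in Theorem~\ref{theorem:oddeventransbcmaxk}, according to the even/odd side of $v$ and into the subclasses with $v$ a leaf and $v$ a non-leaf. Combining the matched weights with the vanishing ``contains $u$ but not $v$'' class then collapses, through the definition of $F_{{BC}_{\leq k}}(\cdot;v_i,v_j)$, to the asserted equality.
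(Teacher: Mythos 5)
Your global plan --- partitioning according to which of $u,v$ a subtree contains, the observation that the class ``contains $u$ but not $v$'' is empty (so, unlike Theorem~\ref{theo:subtreecondes}, no correction term $\sum_{j}f(u)_j$ survives), the parity forcing $u\in B_e(T_2)$, $v\in B_o(T_2)$ with $v$ a non-leaf, and the pairing of the $b_1$/$b_3$ factors with the $f'(v)_o$-rule and of the $b_4$/$b_6$ factors with the $f'(v)_e$-rule --- is exactly the ``essentially the same argument'' the paper intends, and each of those observations is correct. The step that fails is your assertion that deleting $u$ and $e$ sends the $u$-bearing members of $\mathcal{S}_{{BC}_{\leq k}}(T;v_i,v_j)$ into $\mathcal{S}_{{BC}_{\leq k}}(T';v_i,v_j)$: removing the pendant leaf $u$ can turn $v$ into a leaf of the smaller tree, and since $v\in B_o$ the smaller tree then has a leaf at odd distance from its other leaves, i.e.\ it is not a BC-subtree at all. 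Concretely, let $T$ be the three-vertex path with edges $(v_i,v)$ and $(v,u)$, take $v_j=v$, any $k\geq 2$, and the paper's initialization $f(w)=(1,0,\dots,0;\,y,0,\dots,0)$ for every vertex $w$, $g\equiv z$. The whole path is a BC-subtree through $v_i,v_j$ with $\omega_{bc}^k=y^2z^2$, but after deleting $u$ the only subtree of $T'$ through $v_i,v_j$ is the single edge, whose two leaves are at odd distance; hence $\mathcal{S}_{{BC}_{\leq k}}(T';v_i,v_j)=\emptyset$, and no matching confined to BC-subtree families can account for the weight $y^2z^2$. (Read literally, with sums restricted to BC-subtrees, the two sides of the claimed identity are $y^2z^2$ and $0$ in this example.)

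What is missing is the device that the proof of Theorem~\ref{theorem:oddeventransbcmaxk} --- your declared model --- actually relies on: there the partition is of $\mathcal{S}_{\leq k}(T;v_i)$, i.e.\ of \emph{all} subtrees containing the root, pendant deletion/attachment is a genuine bijection at that level, and the parity requirements are enforced not on the objects but by the weights, whose leaf factors $\sum_{m\geq 1}f(\cdot)_o^m$ annihilate (at the initial weighting) every subtree that cannot be completed to the required parity. The present theorem must be organized the same way: match all subtrees of $T$ containing $v_i,v_j$ against all subtrees of $T'$ containing $v_i,v_j$, each weighted by $\omega_{bc}^k$. Then your factor-pairing computation goes through verbatim; in the example above the deleted tree is the single edge $(v_i,v_j)$, not a BC-subtree, yet its $f'$-weight $y\cdot\bigl(\sum_{m\geq 1}f'(v_j)_o^m\bigr)\cdot z=y^2z^2$ carries exactly the lost contribution, the lower limit $m\geq 1$ appearing precisely because $v$ has dropped to a leaf --- the bookkeeping you correctly identified as delicate. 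With that reformulation (and the remark that the all-subtree sum agrees with the BC-restricted sum at the initial atomic weighting, which is why the algorithms are correct end to end), your argument becomes a complete proof; as written, however, the central correspondence between BC-subtree families does not exist.
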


Assume $e=(u,v)$ is an edge of $T =(V(T) ,E(T);f,g)$, denote by $T_u$ (resp. $T_v$) the tree of $T\backslash e$ that contains $u$ (resp. $v$),  the generating function $F_{{BC}_{\leq k}}(T; f, g)$ and $F_{{BC}_{\leq k}}(T; f, g;v)$ follows.

\begin{theorem}
\begin{align*}
F_{{BC}_{\leq k}}(T; f, g) = & F_{{BC}_{\leq k}}(T_u; f, g)+F_{{BC}_{\leq k}}(T_v; f, g) \\
&+\sum\limits_{i=1}^{k-1}F_{\leq k,o_i}(T_v; f, g; v) \times \sum\limits_{i=0}^{k-1}F_{\leq k,e_i}(T_u; f, g; u)g(e) \\
&+\sum\limits_{i=0}^{k-1}F_{\leq k,e_i}(T_v; f, g;v) \times \sum\limits_{i=1}^{k-1}F_{\leq k,o_i}(T_u; f, g; u)g(e).
\end{align*}
\label{theorem:bcnumcomputingtheor}
\end{theorem}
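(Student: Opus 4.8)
The plan is to localize the computation at the edge $e=(u,v)$ and to split $\mathcal{S}_{{BC}_{\leq k}}(T)$ into three classes: the BC-subtrees contained in $T_u$, those contained in $T_v$, and those using the edge $e$. A BC-subtree lying wholly inside $T_u$ (resp. $T_v$) is a BC-subtree of $T_u$ (resp. $T_v$) and keeps the same weight $\omega_{bc}^k$, so these two classes contribute exactly $F_{{BC}_{\leq k}}(T_u;f,g)$ and $F_{{BC}_{\leq k}}(T_v;f,g)$. Everything else is the edge-crossing class, and the two product terms in the statement will come from it.

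For a BC-subtree $T_2$ containing $e$, I would write it uniquely as $T_2=T_2^u\cup\{e\}\cup T_2^v$ with $u\in V(T_2^u)\subseteq V(T_u)$ and $v\in V(T_2^v)\subseteq V(T_v)$. Using the standard fact that, in a tree, all leaves are pairwise at even distance iff they all lie in one class of the bipartition, $T_2$ is a BC-subtree iff all its leaves share the same parity of distance from $u$. Since a leaf on the $v$-side sits at distance $1+d_{T_2^v}(v,\cdot)$ from $u$, this happens exactly when one of $T_2^u,T_2^v$ is even-rooted and the other odd-rooted at its root (in the paper's terminology, a $u_{e_i}^k$- or $u_{o_i}^k$-subtree paired with a $v_{o_{i'}}^k$- or $v_{e_{i'}}^k$-subtree); these are the two families in the statement. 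I would record the degree bookkeeping at the same time: attaching $e$ raises $deg(u)$ and $deg(v)$ by one, so $deg_{T_2^u}(u)\leq k-1$ and $deg_{T_2^v}(v)\leq k-1$; an odd-rooted subtree must have root-degree at least $1$ while an even-rooted one may be the single root (degree $0$), which is exactly why the odd summations start at $1$ and the even ones at $0$, both ending at $k-1$.

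The core step, and the one I expect to be the main obstacle, is the weight factorization across $e$. Fixing a crossing $T_2$, I would compare the bipartition classes $B_e(T_2),B_o(T_2)$ appearing in $\omega_{bc}^k$ with the even/odd distance classes from $u$ and from $v$ used in $\omega_{k,e_j}^u$ and $\omega_{k,o_j}^v$, paying close attention to the leaf-versus-nonleaf bookkeeping (the sums that start at $m=1$ for leaves in $b_2,b_4$ versus $a_5,a_9$) and to the split $b_7=g(e)\prod_{E(T_2^u)}g\prod_{E(T_2^v)}g$ of the edge factor. The target is the orientation-free identity
\[
\omega_{bc}^k(T_2)=\bigl(\omega_{k,e_i}^u(T_2^u)\,\omega_{k,o_{i'}}^v(T_2^v)+\omega_{k,o_i}^u(T_2^u)\,\omega_{k,e_{i'}}^v(T_2^v)\bigr)\,g(e),
\]
with $i=deg_{T_2^u}(u)$ and $i'=deg_{T_2^v}(v)$, so that the two additive pieces $b_1b_2b_3$ and $b_4b_5b_6$ of the BC-weight correspond precisely to the two ways of orienting the bipartition relative to $e$.

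Finally I would sum the factorized weight over the edge-crossing class. A crossing subtree is nothing but a pair $(T_2^u,T_2^v)$ of rooted subtrees constrained only by the independent caps $deg(u),deg(v)\leq k-1$, so the sum factors into a product of one-sided sums. Collecting degrees and invoking the identity $F_{\leq k,o_j}(T;f,g;v)=\sum_{T_1\in\mathcal{S}_{\leq k,o_j}(T;v)}\omega_{k,o_j}^v(T_1)$ and its even analogue stated earlier, the one-sided sums become the generating functions $F_{\leq k,e_i}$ and $F_{\leq k,o_{i'}}$, and the mismatched cross terms (an odd-rooted weight evaluated on an even-rooted side) drop out. What remains is exactly
\[
\sum_{i'=1}^{k-1}F_{\leq k,o_{i'}}(T_v;f,g;v)\sum_{i=0}^{k-1}F_{\leq k,e_i}(T_u;f,g;u)\,g(e)+\sum_{i=0}^{k-1}F_{\leq k,e_i}(T_v;f,g;v)\sum_{i'=1}^{k-1}F_{\leq k,o_{i'}}(T_u;f,g;u)\,g(e),
\]
and adding the two non-crossing contributions $F_{{BC}_{\leq k}}(T_u;f,g)+F_{{BC}_{\leq k}}(T_v;f,g)$ gives the claimed formula. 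I would isolate the factorization as a short lemma, since it carries all the index-and-parity subtlety, and treat the partition and the summation as routine.
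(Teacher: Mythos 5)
The paper offers no proof of Theorem~\ref{theorem:bcnumcomputingtheor} --- it is stated bare, with only the remark that the generating function ``follows'' --- so your proposal is supplying a missing argument rather than shadowing an existing one. Its skeleton is the natural and intended one, and it is sound: the three-way partition into BC-subtrees inside $T_u$, inside $T_v$, and those containing $e$; the parity analysis showing that a crossing BC-subtree $T_2=T_2^u\cup\{e\}\cup T_2^v$ must pair an even-rooted side with an odd-rooted side; the degree caps $deg_{T_2^u}(u)\leq k-1$, $deg_{T_2^v}(v)\leq k-1$ together with the observation that an odd-rooted side needs root degree $\geq 1$, which is exactly where the summation limits in the statement come from; then factorize across $e$ and sum.

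The step to tighten is the factorization lemma you single out as the core. With exact root degrees $i=deg_{T_2^u}(u)$, $i'=deg_{T_2^v}(v)$, the identity $\omega_{bc}^k(T_2)=\bigl(\omega_{k,e_i}^u(T_2^u)\,\omega_{k,o_{i'}}^v(T_2^v)+\omega_{k,o_i}^u(T_2^u)\,\omega_{k,e_{i'}}^v(T_2^v)\bigr)g(e)$ is not an identity of the weight calculus for arbitrary $f$: the root factor of $\omega_{k,e_i}^u(T_2^u)$ is the single entry $f(u)_e^{0}$, whereas the factor that $u$ contributes to $b_1$ inside $\omega_{bc}^k(T_2)$ is $\sum_{m=0}^{k-1-i}f(u)_e^{m}$, because $deg_{T_2}(u)=i+1$. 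The version that holds factor-by-factor for arbitrary weights is the one summed over root degrees,
\[
\omega_{bc}^k(T_2)=\Bigl(\sum_{i=0}^{k-1}\omega_{k,e_i}^u(T_2^u)\sum_{i'=1}^{k-1}\omega_{k,o_{i'}}^v(T_2^v)+\sum_{i=1}^{k-1}\omega_{k,o_i}^u(T_2^u)\sum_{i'=0}^{k-1}\omega_{k,e_{i'}}^v(T_2^v)\Bigr)g(e),
\]
in which, for a fixed crossing BC-subtree, the two products are precisely the two additive pieces $b_1b_2b_3b_7$ and $b_4b_5b_6b_7$ (in one order or the other), and in general neither vanishes. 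Your per-pair identity, and likewise the claim that the mismatched cross terms ``drop out,'' hold only under initialization-type weights $f(\cdot)_o^m=f(\cdot)_e^m=0$ for $m\geq 1$, which collapse each root sum to a single entry and annihilate one of the two products. This restriction is benign here --- the paper's own assertion that $F_{\leq k,o_j}(T;f,g;v)$ equals both the sum over $\mathcal{S}_{\leq k}(T;v)$ and the sum over $\mathcal{S}_{\leq k,o_j}(T;v)$ already presupposes exactly this vanishing, and the algorithms only ever invoke the theorem with such weights --- but your lemma should either carry that hypothesis explicitly or be stated in the summed form above; as written it would fail for a general weighted tree (as, to be fair, would the theorem itself).
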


\begin{theorem}
\begin{align*}
F_{{BC}_{\leq k}}(T; f, g;v) = & F_{{BC}_{\leq k}}(T_v; f, g;v) +\sum\limits_{i=1}^{k-1}F_{\leq k,o_i}(T_v; f, g; v) \times \sum\limits_{i=0}^{k-1}F_{\leq k,e_i}(T_u; f, g; u)g(e) \\
&+\sum\limits_{i=0}^{k-1}F_{\leq k,e_i}(T_v; f, g;v) \times \sum\limits_{i=1}^{k-1}F_{\leq k,o_i}(T_u; f, g; u)g(e).
\end{align*}
\label{theorem:bcnumcontainvertextheor}
\end{theorem}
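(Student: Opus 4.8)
The plan is to argue exactly as in Theorem~\ref{theorem:bcnumcomputingtheor}, the sole change being that we restrict throughout to BC-subtrees forced to contain the vertex $v$. First I would partition the set $\mathcal{S}_{{BC}_{\leq k}}(T;v)$ according to whether a member uses the edge $e=(u,v)$. Deleting $e$ breaks $T$ into the two components $T_u$ and $T_v$, so any BC-subtree avoiding $e$, being connected, lies wholly inside one of them; the requirement that it contain $v$ places it inside $T_v$. Consequently the contribution of the non-crossing BC-subtrees is, by definition, $F_{{BC}_{\leq k}}(T_v;f,g;v)$.

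Next I would analyse the BC-subtrees $T_2$ that do use $e$. Each such $T_2$ contains both endpoints $u$ and $v$, so it automatically satisfies the constraint $v\in V(T_2)$; this is precisely why the crossing part needs no further restriction. Cutting $T_2$ at $e$ expresses it as a piece rooted at $v$ inside $T_v$ joined through $e$ to a piece rooted at $u$ inside $T_u$. Using the bipartite distance-parity of a tree, the BC-condition becomes the requirement that a $T_v$-side leaf at distance $d_v$ from $v$ and a $T_u$-side leaf at distance $d_u$ from $u$ obey $d_v+1+d_u\equiv 0\pmod 2$, while same-side leaves stay pairwise even. Hence $d_v$ and $d_u$ must have opposite parities, leaving exactly two admissible shapes: the $v$-piece odd-rooted with the $u$-piece even-rooted, or the $v$-piece even-rooted with the $u$-piece odd-rooted. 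Summing $\omega_{bc}^k$ over these two shapes reproduces the two products $\sum_{i=1}^{k-1}F_{\leq k,o_i}(T_v;f,g;v)\sum_{i=0}^{k-1}F_{\leq k,e_i}(T_u;f,g;u)g(e)$ and $\sum_{i=0}^{k-1}F_{\leq k,e_i}(T_v;f,g;v)\sum_{i=1}^{k-1}F_{\leq k,o_i}(T_u;f,g;u)g(e)$, where the range $i\le k-1$ records that $e$ raises the degree of each endpoint by one, and the odd sums start at $i=1$ because an odd-rooted subtree cannot be a lone vertex.

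Because these two products are identical to the crossing contribution already established in Theorem~\ref{theorem:bcnumcomputingtheor}, I would import them verbatim rather than re-expand the factorization $\omega_{bc}^k=(b_1b_2b_3+b_4b_5b_6)b_7$; adding them to $F_{{BC}_{\leq k}}(T_v;f,g;v)$ yields the stated identity. Viewed against Theorem~\ref{theorem:bcnumcomputingtheor}, the only new bookkeeping is that demanding $v\in V(T_2)$ deletes every BC-subtree living purely in $T_u$ (erasing the $F_{{BC}_{\leq k}}(T_u;f,g)$ term) and trims the purely-$T_v$ ones to those containing $v$ (turning $F_{{BC}_{\leq k}}(T_v;f,g)$ into $F_{{BC}_{\leq k}}(T_v;f,g;v)$), while the crossing terms survive unchanged. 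I expect the main, and only modest, obstacle to be matching the parity case-split and the degree-range limits against the factors $b_1,\dots,b_7$ of $\omega_{bc}^k$ on the two sides of $e$; reusing the crossing computation from Theorem~\ref{theorem:bcnumcomputingtheor} makes this verification immediate.
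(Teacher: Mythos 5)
Your proposal is correct and takes essentially the same approach as the paper: the paper gives no detailed proof of this theorem, merely asserting (after stating Theorem~\ref{theorem:bcnumcomputingtheor}) that it follows from the same edge-deletion decomposition, which is exactly the partition you carry out — BC-subtrees containing $v$ either avoid $e$ and hence lie in $T_v$, contributing $F_{{BC}_{\leq k}}(T_v;f,g;v)$, or cross $e$ (and then automatically contain $v$), contributing the two parity-matched products. Your parity analysis and degree-range bookkeeping (odd sums starting at $i=1$, upper limits $k-1$ to account for the extra edge at each endpoint) correctly fill in the details the paper leaves implicit.
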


The following identity will be used in Algorithm \ref{Algorithm:degereekbcsubtreenum}
\begin{equation}\label{equ:ntlongformule}
\begin{split}
N_T=&N_T+\Big(\sum\limits_{i=1}^{k-1}F_{\leq k,o_i}(T_p; f, g; p) \times \sum\limits_{i=0}^{k-1}F_{\leq k,e_i}(T_u; f, g; u) \\
&+ \sum\limits_{i=0}^{k-1}F_{\leq k,e_i}(T_p; f, g; p) \times \sum\limits_{i=1}^{k-1}F_{\leq k,o_i}(T_u; f, g; u)\Big)g(e);
\end{split}
\end{equation}

\begin{algorithm}[!ht]
\caption{Generating function $F_{{BC}_{\leq k}}(T;f,g)$ for enumerating BC-subtrees with maximum degree $\leq k$ of weighted tree $T=(V(T),E(T);f,g)$}
\label{Algorithm:degereekbcsubtreenum}
\begin{algorithmic}[1]
\State Initialize with $(f(v)_{o}^{0},f(v)_{o}^{1},\dots ,f(v)_{o}^{k};f(v)_{e}^{0},f(v)_{e}^{1},\dots,f(v)_{e}^{k}$)($f(v_s)_{o}^{0}=1, f(v_s)_{e}^{0}=y,f(v_s)_{o}^{1}=,\dots,=f(v_s)_{o}^{k}=f(v_s)_{e}^{1}=,\dots,=f(v_s)_{e}^{k}=0$) for each vertex $v\in V(T)$. Define $N_T=0$.

\If{$T$ is not a single vertex tree}

\State \Call{Contract3}{$T,f,g$};\;

\EndIf

\State \Return $F_{{BC}_{\leq k}}(T;f,g)=N_T$.

\Procedure {Contract3}{$T,f,g$}

\While{$T$ has at least one edge}

\State Choose a random edge $e\in E(T)$ and denote $e=(u,p)$;  \;

\State   Eliminate the edge $(u,p)$, and denote $T_u$ $(T_p)$ the tree of $T \backslash (u,p)$
\Statex \yy{0.4}that contains $u$ ($p$);\;

\State Calculate $F_{\leq k,o_j}(T_u; f, g; u)$, $F_{\leq k,e_j}(T_u; f, g; u)$ (resp.
\Statex \yy{0.4} $F_{\leq k,o_j}(T_p; f, g; p)$, $F_{\leq k,e_j}(T_p; f, g; p))$ ($j=0,1\dots,k$) by  setting
 \Statex \yy{0.4} $ T:=T_u; v_i:=u (resp.~T:=T_p; v_i:=p)$ and calling
 Algorithm~\ref{Algorithm:degreecontibcsubtreecontfixedu};

\State Update $N_T$ with Eq.~\eqref{equ:ntlongformule} \;

\State Calculate $F_{{BC}_{\leq k}}(T_u;f,g)$ (resp.~$F_{{BC}_{\leq k}}(T_p;f,g)$) by setting $T:=T_u$
\Statex \yy{0.4}(resp.~$T:=T_p$), $f:=f, g:=g$ and calling procedure
 \Statex \yy{0.4} \Call{Contract3}{$T,f,g$} recursively;\;
\EndWhile

\EndProcedure

\end{algorithmic}
\end{algorithm}

We also obtain Algorithm~\ref{Algorithm:BCsubtreenumaxfixedkcontavertex} (resp.~Algorithm~\ref{Algorithm:BCsubtreenumaxfixedkconttwovertex}) of enumerating the BC-subtrees containing a fixed vertex (resp. two fixed vertices) with maximum degree $\leq k$ of a tree.

\begin{equation}\label{equ:bcntlongformule}
\begin{split}
N_T=&N_T+\Big(\sum\limits_{i=1}^{k-1}F_{\leq k,o_i}(T_{v_{t}}; f, g; v_{t}) \times \sum\limits_{i=0}^{k-1}F_{\leq k,e_i}(T_{v_{tmp}}; f, g; v_{tmp}) \\
&+ \sum\limits_{i=0}^{k-1}F_{\leq k,e_i}(T_{v_{t}}; f, g; v_{t}) \times \sum\limits_{i=1}^{k-1}F_{\leq k,o_i}(T_{v_{tmp}}; f, g; v_{tmp})\Big)g(e);
\end{split}
\end{equation}

\begin{algorithm}[!ht]
\caption{Generating function $F_{{BC}_{\leq k}}(T;f,g;v_t)$ for enumerating BC-subtrees containing a fixed vertex $v_t\in V(T)$ with maximum degree $\leq k$ of weighted tree $T=(V(T),E(T);f,g)$}
\label{Algorithm:BCsubtreenumaxfixedkcontavertex}
\begin{algorithmic}[1]
\State Initialize with $(f(v_s)_{o}^{0},f(v_s)_{o}^{1},\dots ,f(v_s)_{o}^{k};f(v_s)_{e}^{0},f(v_s)_{e}^{1},\dots,f(v_s)_{e}^{k}$)($f(v_s)_{o}^{0}=1, f(v_s)_{e}^{0}=y,f(v_s)_{o}^{1}=,\dots,=f(v_s)_{o}^{k}=f(v_s)_{e}^{1}=,\dots,=f(v_s)_{e}^{k}=0$) for each vertex $v_s \in V(T)$,
and set $N_{T}=0$;\;
\If{$T$ is not the single vertex tree $v_t$}

\State \Call{Contract4}{$T,f,g$};\;

\EndIf

\State \Return $F_{{BC}_{\leq k}}(T;f,g;v_t)=N_T$.

\Procedure {Contract4}{$T,f,g$}

\While{$v_{t}$ has neighbour vertex}
\State Choose a neighbour vertex $v_{tmp}$ of $v_{t}$  and denote edge $e=(v_{tmp},v_{t})$;  \;

\State   Eliminate the edge $e$, and denote $T_{v_{t}}$ $(T_{v_{tmp}})$ the tree of $T \backslash (v_{tmp},v_{t})$
\Statex \yy{0.4}that contains $v_{t}$ ($v_{tmp}$);\;

\State Calculate $F_{\leq k,o_j}(T_{v_{t}}; f, g; v_{t})$, $F_{\leq k,e_j}(T_{v_{t}}; f, g; v_{t})$ (resp.
\Statex \yy{0.4} $F_{\leq k,o_j}(T_{v_{tmp}}; f, g; v_{tmp})$, $F_{\leq k,e_j}(T_{v_{tmp}}; f, g; v_{tmp}))$ ($j=0,1\dots,k$) by
 \Statex \yy{0.4}setting $ T:=T_{v_{t}}; v_i:=v_{t} (resp.~T:=T_{v_{tmp}}; v_i:=v_{tmp})$ and calling \Statex \yy{0.4}Algorithm~\ref{Algorithm:degreecontibcsubtreecontfixedu};

\State Update $N_T$ with Eq.~(\ref{equ:bcntlongformule}) \;

\EndWhile

\EndProcedure

\end{algorithmic}
\end{algorithm}

{\small\begin{equation}\label{equ:bctwoverticeslongformule}
\begin{split}
N_T=&\prod\limits_{e\in E(P_{v_i v_j})}g(e)\Big(\sum\limits_{m=1}^{k-1}f(v_i)_{o}^{m}\sum\limits_{m=1}^{k-1}f(v_j)_{o}^{m}\prod\limits_{i=1}^{l-1}(\sum\limits_{m=0}^{k-2}f(u_i)_o^m)^{1-i(\text{mod}~2)}(\sum\limits_{m=0}^{k-2}f(u_i)_e^m)^{i(\text{mod}~2)}\\
&+\sum\limits_{m=0}^{k-1}f(v_i)_{e}^{m}\sum\limits_{m=0}^{k-1}f(v_j)_{e}^{m}\prod\limits_{i=1}^{l-1}(\sum\limits_{m=0}^{k-2}f(u_i)_o^m)^{i(\text{mod}~2)}(\sum\limits_{m=0}^{k-2}f(u_i)_e^m)^{1-i(\text{mod}~2)}\Big)
\end{split}
\end{equation}}

{\small \begin{equation}\label{equ:bc2twoverticeslongformule}
\begin{split}
N_T=&\prod\limits_{e\in E(P_{v_i v_j})}g(e)\Big(\sum\limits_{m=1}^{k-1}f(v_i)_{o}^{m}\sum\limits_{m=0}^{k-1}f(v_j)_{e}^{m}\prod\limits_{i=1}^{l-1}(\sum\limits_{m=0}^{k-2}f(u_i)_o^m)^{1-i(\text{mod}~2)}(\sum\limits_{m=0}^{k-2}f(u_i)_e^m)^{i(\text{mod}~2)}\\
&+\sum\limits_{m=0}^{k-1}f(v_i)_{e}^{m}\sum\limits_{m=1}^{k-1}f(v_j)_{o}^{m}\prod\limits_{i=1}^{l-1}(\sum\limits_{m=0}^{k-2}f(u_i)_o^m)^{i(\text{mod}~2)}(\sum\limits_{m=0}^{k-2}f(u_i)_e^m)^{1-i(\text{mod}~2)}\Big)
\end{split}
\end{equation}}
\begin{algorithm}[!ht]
\caption{Generating function $F_{{BC}_{\leq k}}(T;f,g;v_i, v_j)$ for enumerating BC-subtrees containing two distinct vertices $v_i, v_j\in V(T)(i\neq j)$ with maximum degree $\leq k$ of weighted tree $T=(V(T),E(T);f,g)$}
\label{Algorithm:BCsubtreenumaxfixedkconttwovertex}
\begin{algorithmic}[1]
\State Initialize with $(f(v_s)_{o}^{0},f(v_s)_{o}^{1},\dots ,f(v_s)_{o}^{k};f(v_s)_{e}^{0},f(v_s)_{e}^{1},\dots,f(v_s)_{e}^{k}$)($f(v_s)_{o}^{0}=1, f(v_s)_{e}^{0}=y,f(v_s)_{o}^{1}=,\dots,=f(v_s)_{o}^{k}=f(v_s)_{e}^{1}=,\dots,=f(v_s)_{e}^{k}=0$) for each vertex $v_s \in V(T)$;\;

\State Let $T_{tmp}:=T$, and set $N_{T}=0$;\;

\State  \Call{Contract5}{ };\;
\Statex \small{/* denote $P_{v_i v_j}=v_i(u_0)u_1u_2\dots u_{l-1}v_j(u_l)$ the unique path of length $l(\geq1)$ of $T$ connecting $v_i$ and $v_j$, where $v_i=u_0$ and $v_j=u_l$.*/}

\If{$l\equiv 0 (\text{mod 2})$}

\State  Update $N_{T}$ with Eq.~\eqref{equ:bctwoverticeslongformule};\;

\Else
\State  Update $N_{T}$ with Eq.~\eqref{equ:bc2twoverticeslongformule};\;

\EndIf

\State \Return $F_{{BC}_{\leq k}}(T;f,g;v_i, v_j)=N_T$.

\Procedure {Contract5}{ }

\While{$T_{tmp}$ has pendant vertex that is different from $v_i$ and $v_j$}
\State Choose a pendant vertex $u$, which is different from $v_i$ and $v_j$, and  \;

\Statex \yy{0.4} let $e=(u,p)$ denote the pendant edge. \;

\For{$(j=1;j\leq k;j++)$}

\State  Update $f(p)_{o}^{j}$ with $f(p)_{o}^{j}+f(p)_{o}^{j-1} g(e) \sum\limits_{m=0}^{k-1}f(u')_{e}^{m}$;\;

\State Update $f(p)_{e}^{j}$ with $f(p)_{e}^{j}+f(p)_{e}^{j-1}g(e) \sum\limits_{m=1}^{k-1}f(u')_{o}^{m}$;\;
\EndFor

\State Eliminate vertex $u$ and edge $e$ and let $T_{tmp}:=T_{tmp}\backslash((u,p)\cup u)$; \;

\EndWhile

\EndProcedure

\end{algorithmic}
\end{algorithm}

Consequently, with Algorithms \ref{Algorithm:subtreenumaxfixedk}-\ref{Algorithm:BCsubtreenumaxfixedkconttwovertex}, we can obtain the number of various subtrees, BC-subtrees with maximum degree $k$ of trees is just $F_{\leq k}(\cdot)-F_{\leq k-1}(\cdot)$ and  $F_{{BC}_{\leq k}}(\cdot)-F_{{BC}_{\leq k-1}}(\cdot)$, respectively.

\section{Algorithm implementation and data discussion}
\label{sec:algoforimplementanddiscuss}
$\bold{Example~1}$ To better understand the Algorithms \ref{Algorithm:subtreenumaxfixedk}-\ref{Algorithm:subtreenumaxfixedkconttwovertex}, we illustrate the procedures of computing the respective generating functions for a tree $T$ (see Fig.~\ref{fig:illustrationcasedemosubtreemaxk}), and we initialize each vertex weight $(y,0,0,0,0)$ and edge weight $z$ and set $k=4$.

\begin{figure}[!htbp]
\centering
\includegraphics[width=0.93\textwidth]{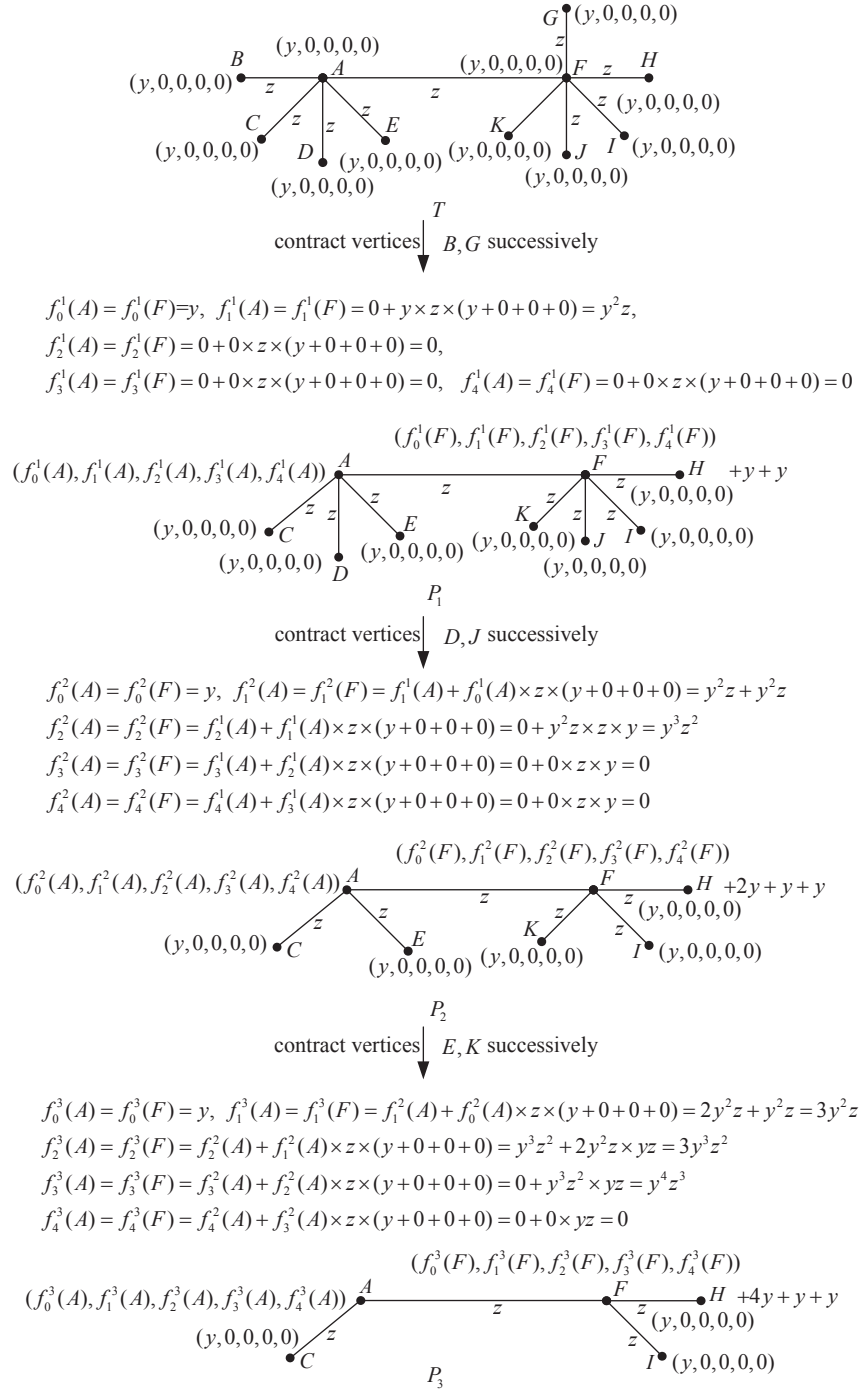}\\
\caption{Illustration of the procedures for computing $F_{\leq 4}(T;(y,0,0,0,0),z)$, $F_{\leq 4}(T;(y,0,0,0,0),z;A)$, $F_{\leq 4}(T;(y,0,0,0,0),z;A, H)$ of a weighted tree $T$ by Algorithms \ref{Algorithm:subtreenumaxfixedk}, \ref{Algorithm:subtreenumaxfixedkcontavertex}, \ref{Algorithm:subtreenumaxfixedkconttwovertex}, respectively.}
\label{fig:illustrationcasedemosubtreemaxk}
\end{figure}

\addtocounter{figure}{-1}
\begin{figure}[!tp]
\centering
\includegraphics[width=\textwidth]{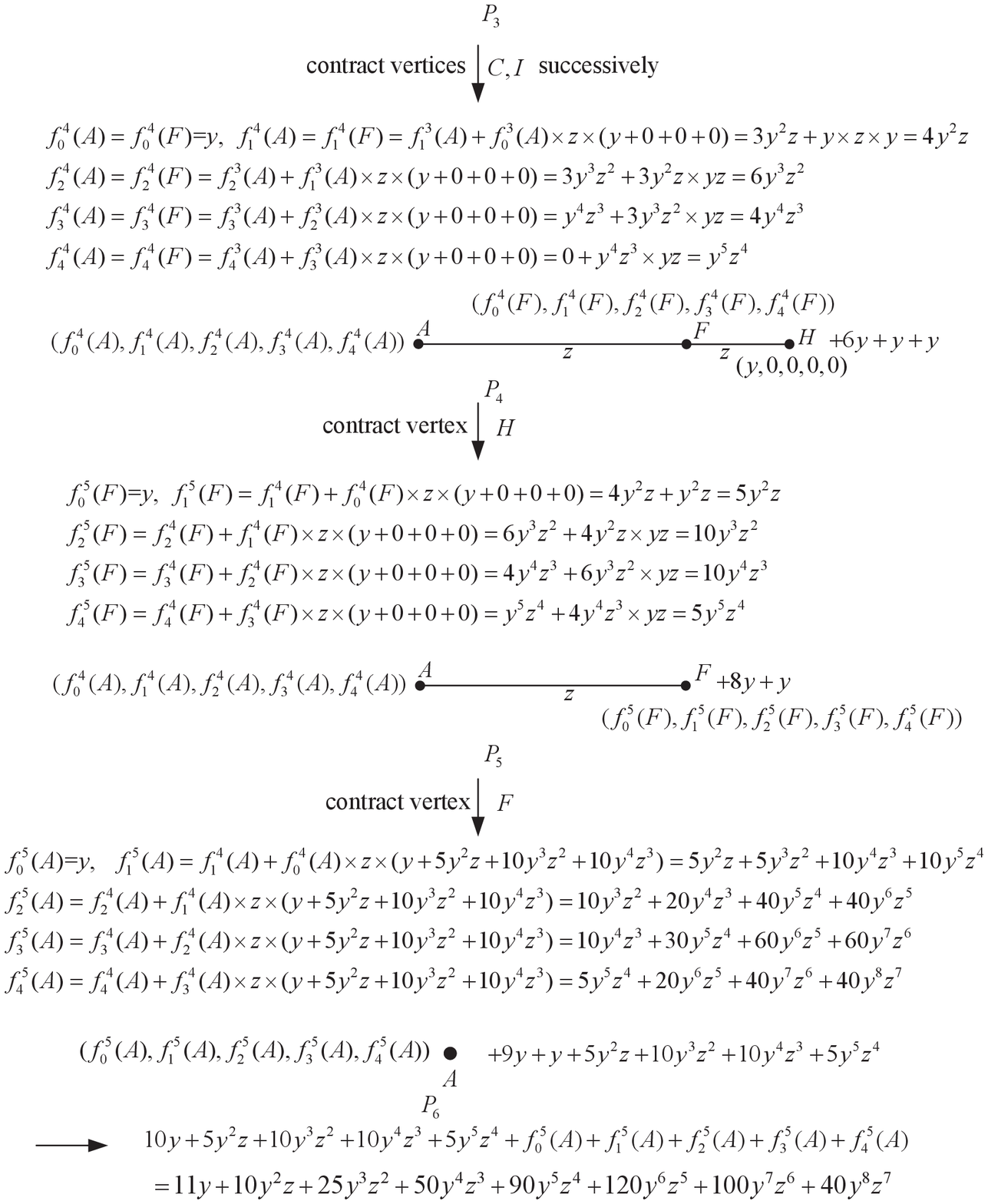}\\
\caption{Illustration of the procedures for computing $F_{\leq 4}(T;(y,0,0,0,0),z)$, $F_{\leq 4}(T;(y,0,0,0,0),z;A)$, $F_{\leq 4}(T;(y,0,0,0,0),z;A,H)$ of a weighted tree $T$ by Algorithms \ref{Algorithm:subtreenumaxfixedk}, \ref{Algorithm:subtreenumaxfixedkcontavertex}, \ref{Algorithm:subtreenumaxfixedkconttwovertex}, respectively(contd.).}
\label{fig:illustrationcasedemosubtreemaxkconti}
\end{figure}

By Algorithm \ref{Algorithm:subtreenumaxfixedkconttwovertex} and phase $P_4$ of Fig.~\ref{fig:illustrationcasedemosubtreemaxk}, we have $F_{\leq 4}(T;(y,0,0,0,0),z;A, H)=(y+4y^2z+6y^3z^2+4y^4z^3)\times z\times(y+4y^2z+6y^3z^2)\times z\times(y+0+0+0)=24y^8z^7 + 52y^7z^6 + 52y^6z^5 + 28y^5z^4 + 8y^4z^3 + y^3z^2$. By Algorithm \ref{Algorithm:subtreenumaxfixedkcontavertex} and phase $P_6$ of Fig.~\ref{fig:illustrationcasedemosubtreemaxk}, we have $F_{\leq 4}(T;(y,0,0,0,0),z;A)=f_0^5(A)+f_1^5(A)+f_2^5(A)+f_3^5(A)+f_4^5(A)=40y^8z^7 + 100y^7z^6 + 120y^6z^5 + 85y^5z^4 + 40y^4z^3 + 15y^3z^2 + 5y^2z + y$. By Algorithm \ref{Algorithm:subtreenumaxfixedk} and phase $P_6$ of Fig.~\ref{fig:illustrationcasedemosubtreemaxk}, we have $F_{\leq 4}(T;(y,0,0,0,0),z)=11y+10y^2z+25y^3z^2+50y^4z^3+90y^5z^4+120y^6z^5+100y^7z^6+40y^8z^7$. Clearly, the coefficients of $y^iz^{i-1}(i\geq 1)$ represents the number of subtrees on $i$ vertices with maximum degree $\leq 4$ of $T$. For instance, there are 11, 10, 25, 50, 90 120, 100, 40 subtrees on 1 to 8 vertices with maximum degree $\leq 4$ of $T$, respectively.

Moreover, by substituting $y=1, z=1$ to the above generating functions $F_{\leq 4}(T;(y,0,0,0,0),z)$, $F_{\leq 4}(T;(y,0,0,0,0),z;A)$, $F_{\leq 4}(T;(y,0,0,0,0),z;A, H)$, we have $\eta_{\leq 4}(T;A,H)=165$, $\eta_{\leq 4}(T;A)=406$, $\eta_{\leq 4}(T)=446$. Namely, there are 446 subtrees in total, 406 subtrees that contain vertex $A$, and 165 subtrees that contain vertices $A$ and $H$ of $T$, with maximum degree $\leq 4$.

$\bold{Example~2}$ Similarly, we illustrate the procedures of Algorithms \ref{Algorithm:degreecontibcsubtreecontfixedu} and \ref{Algorithm:BCsubtreenumaxfixedkcontavertex} to compute the generating functions $F_{\leq k,o_j}(T;f,g;v_{t}),F_{\leq k,e_j}(T;f,g;v_{t})$ ($j=0,1\dots,k$)(see Fig.~\ref{fig:oddevenandtwoforbc}), and $F_{{BC}_{\leq k}}(T;f,g;v_t)$ for a fixed vertex $v_{t} \in V(T)$ (see Fig.~\ref{fig:bcfixverillu}), and also the procedures of Algorithms \ref{Algorithm:degereekbcsubtreenum} and \ref{Algorithm:BCsubtreenumaxfixedkconttwovertex} to compute $F_{{BC}_{\leq k}}(T;f,g)$ (see Fig.~\ref{fig:bcmaxk}), $F_{{BC}_{\leq k}}(T;f,g;v_i, v_j)$ containing two distinct vertices $v_i, v_j\in V(T)\\(i\neq j)$ of a tree $T$(see Fig.~\ref{fig:oddevenandtwoforbc}). Here, we set $k=3$ and initialize its each vertex weight with $(1,0,0,0;y,0,0,0)$ and edge weight with $z$.

\begin{figure}[!htbp]
\centering
\includegraphics[width=0.96\textwidth]{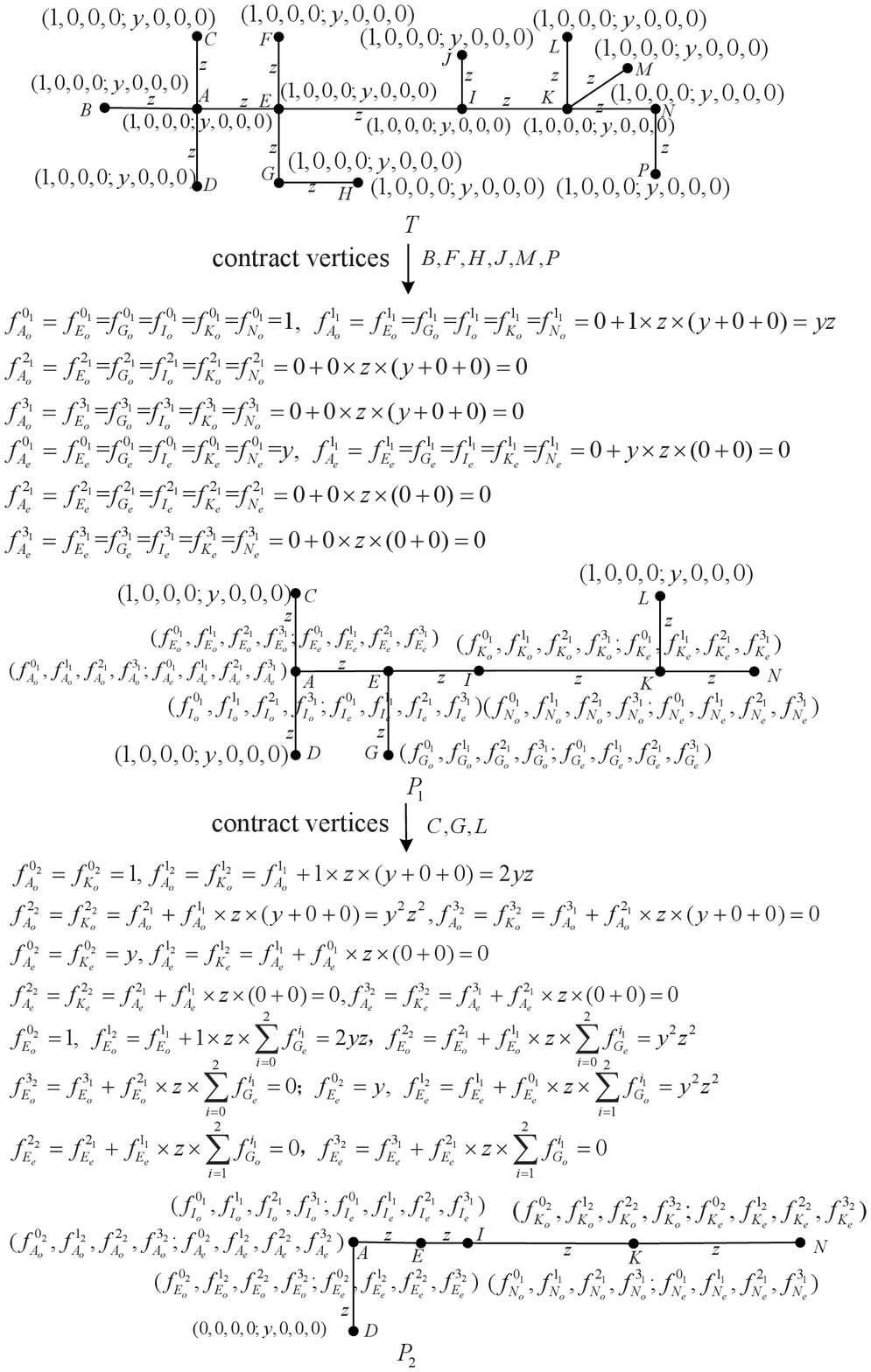}\\
\caption{Illustration of the procedures for computing $F_{\leq 3,o_j}(T;(1,0,0,0;y,0,0,0),z;I),F_{\leq 3,e_j}(T;(1,0,0,0;y,0,0,0),z;I)$ ($j=0,1,2,3$), $F_{{BC}_{\leq 3}}(T;(1,0,0,0;y,0,0,0),z;A, K)$ of a weighted tree $T$ by Algorithms \ref{Algorithm:degreecontibcsubtreecontfixedu}, \ref{Algorithm:BCsubtreenumaxfixedkconttwovertex} respectively.}
\label{fig:oddevenandtwoforbc}
\end{figure}
\addtocounter{figure}{-1}
\begin{figure}[!htbp]
\centering
\includegraphics[width=\textwidth]{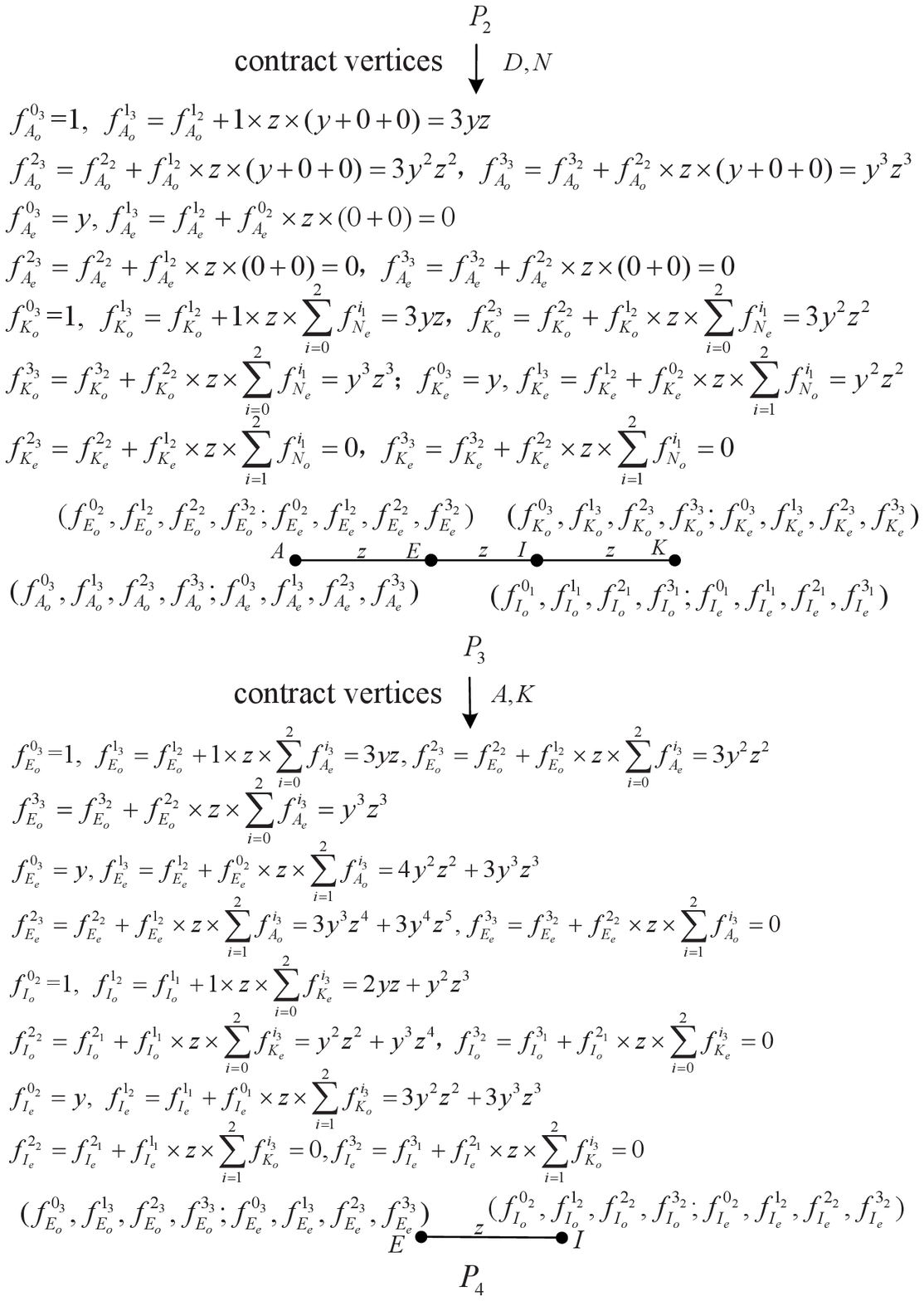}\\
\caption{Illustration of the procedures for computing $F_{\leq 3,o_j}(T;(1,0,0,0;y,0,0,0),z;I),F_{\leq 3,e_j}(T;(1,0,0,0;y,0,0,0),z;I)$ ($j=0,1,2,3$), $F_{{BC}_{\leq 3}}(T;(1,0,0,0;y,0,0,0),z;A, K)$ of a weighted tree $T$ by Algorithms \ref{Algorithm:degreecontibcsubtreecontfixedu}, \ref{Algorithm:BCsubtreenumaxfixedkconttwovertex} respectively (contd.).}
\end{figure}

\addtocounter{figure}{-1}
\begin{figure}[!htbp]
\centering
\includegraphics[width=\textwidth]{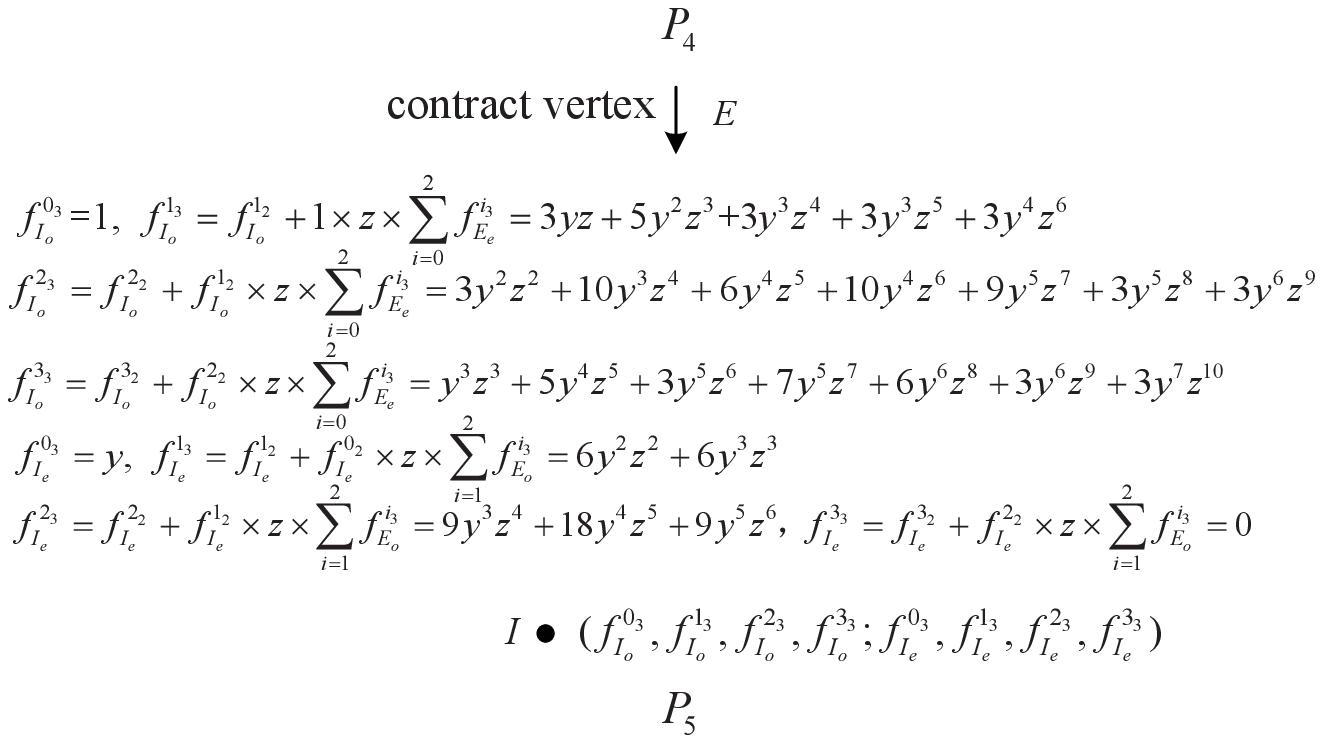}\\
\caption{Illustration of the procedures for computing $F_{\leq 3,o_j}(T;(1,0,0,0;y,0,0,0),z;I),F_{\leq 3,e_j}(T;(1,0,0,0;y,0,0,0),z;I)$ ($j=0,1,2,3$), $F_{{BC}_{\leq 3}}(T;(1,0,0,0;y,0,0,0),z;A, K)$ of a weighted tree $T$ by Algorithms \ref{Algorithm:degreecontibcsubtreecontfixedu}, \ref{Algorithm:BCsubtreenumaxfixedkconttwovertex} respectively (contd.).}
\end{figure}

\begin{figure}[!htbp]
\centering
\includegraphics[width=1.05\textwidth]{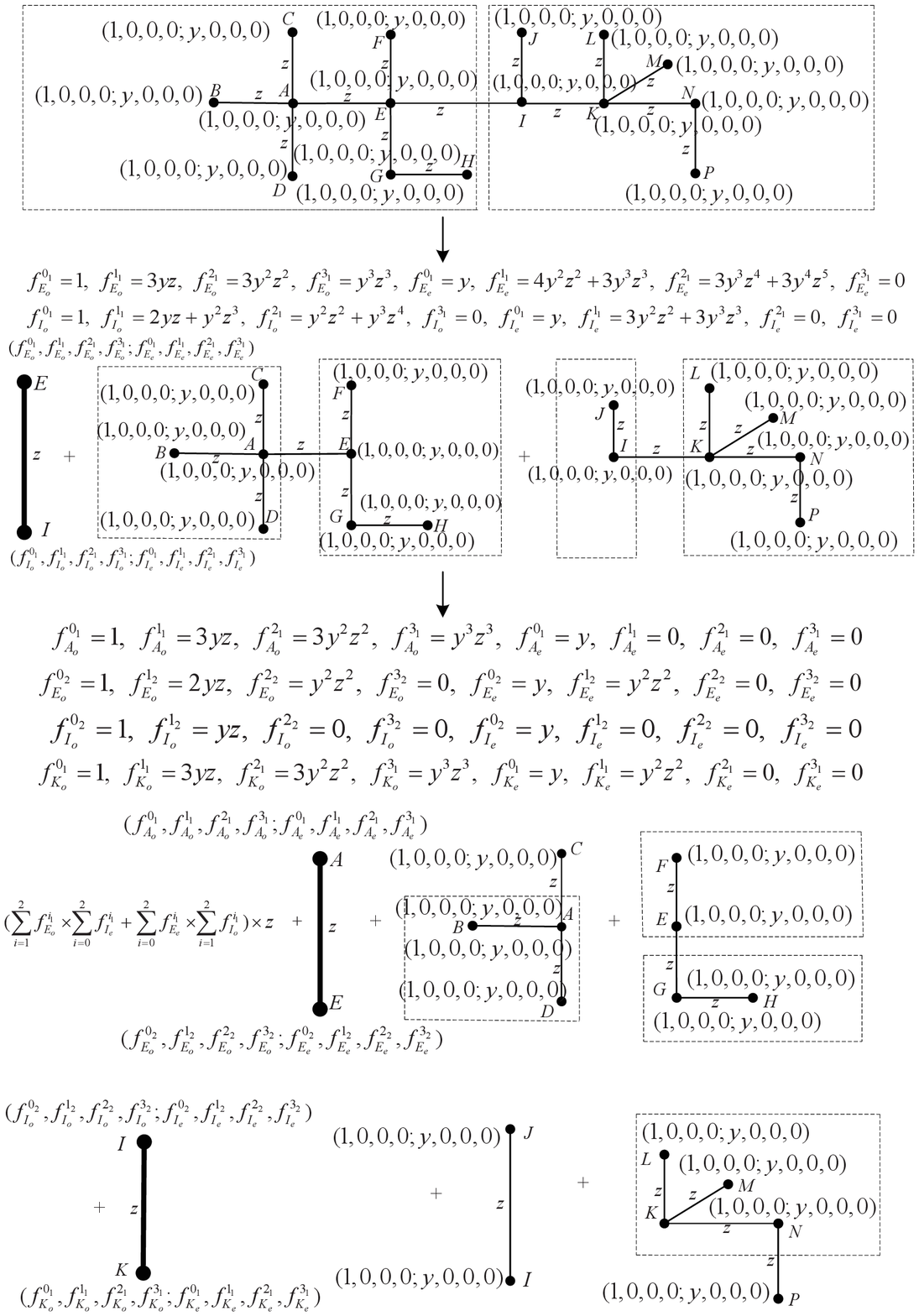}\\
\caption{Illustration of the procedures for computing $F_{{BC}_{\leq 3}}(T;(1,0,0,0;y,0,0,0),z)$ of a weighted tree $T$ by Algorithm \ref{Algorithm:degereekbcsubtreenum}.}
\label{fig:bcmaxk}
\end{figure}
\addtocounter{figure}{-1}
\begin{figure}[!htbp]
\centering
\includegraphics[width=\textwidth]{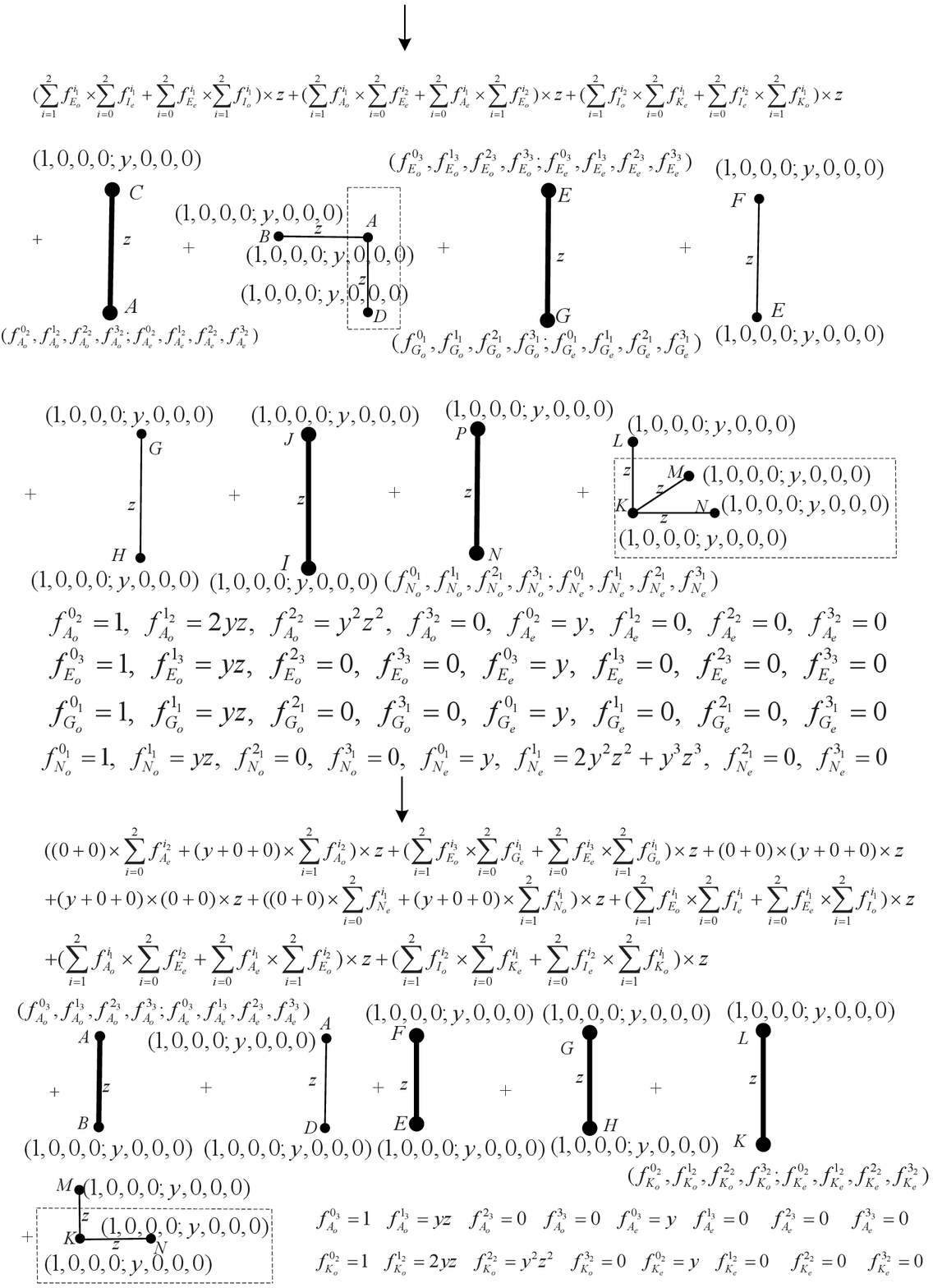}\\
\caption{Illustration of the procedures for computing $F_{{BC}_{\leq 3}}(T;(1,0,0,0;y,0,0,0),z)$ of a weighted tree $T$ by Algorithm \ref{Algorithm:degereekbcsubtreenum} (contd.).}
\label{fig:bcillustrationcasedemosubtreemaxk}
\end{figure}
\addtocounter{figure}{-1}
\begin{figure}[!htbp]
\centering
\includegraphics[width=\textwidth]{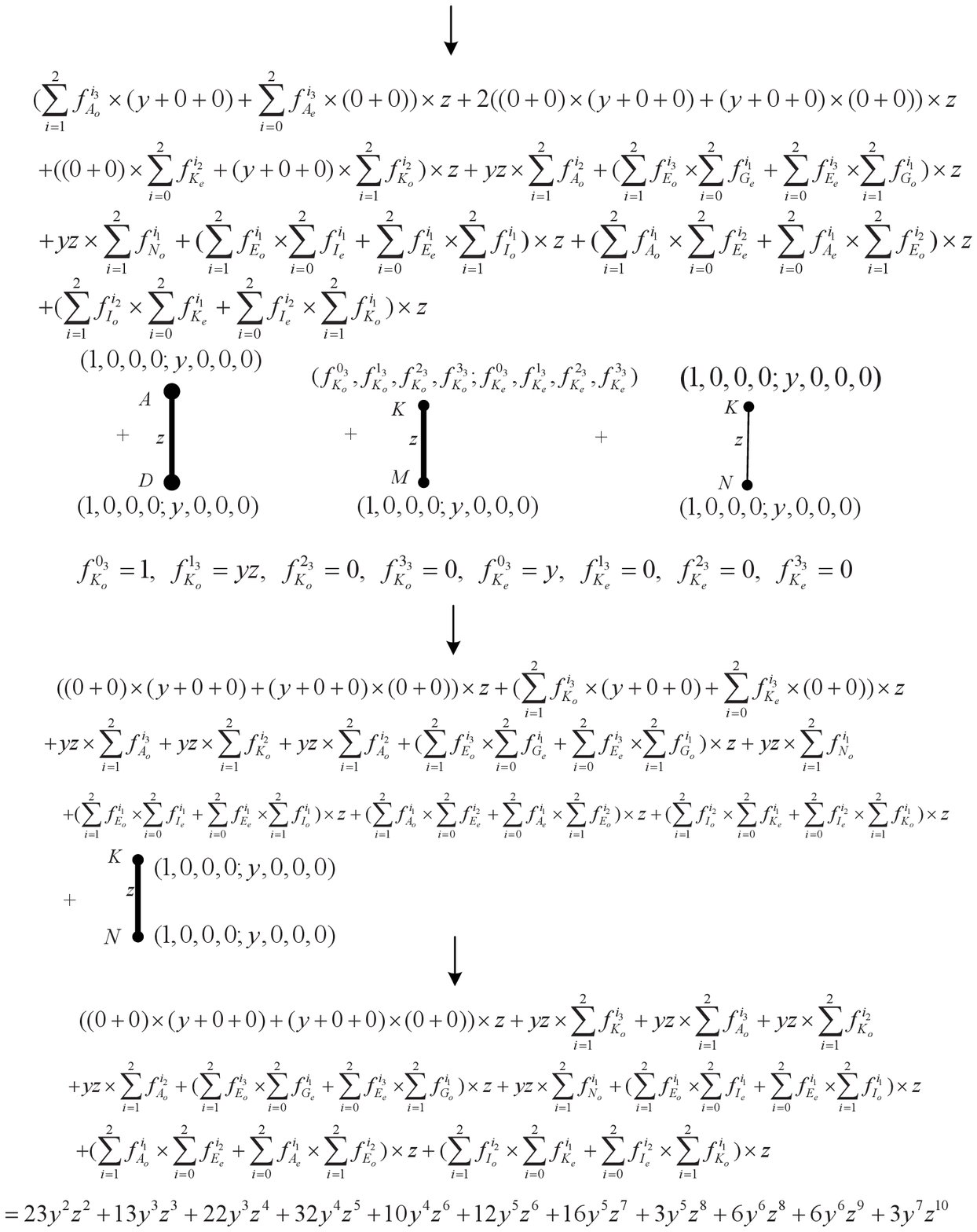}\\
\caption{Illustration of the procedures for computing $F_{{BC}_{\leq 3}}(T;(1,0,0,0;y,0,0,0),z)$ of a weighted tree $T$ by Algorithm \ref{Algorithm:degereekbcsubtreenum} (contd.).}
\end{figure}

\begin{figure}[!htbp]
\centering
\includegraphics[width=\textwidth]{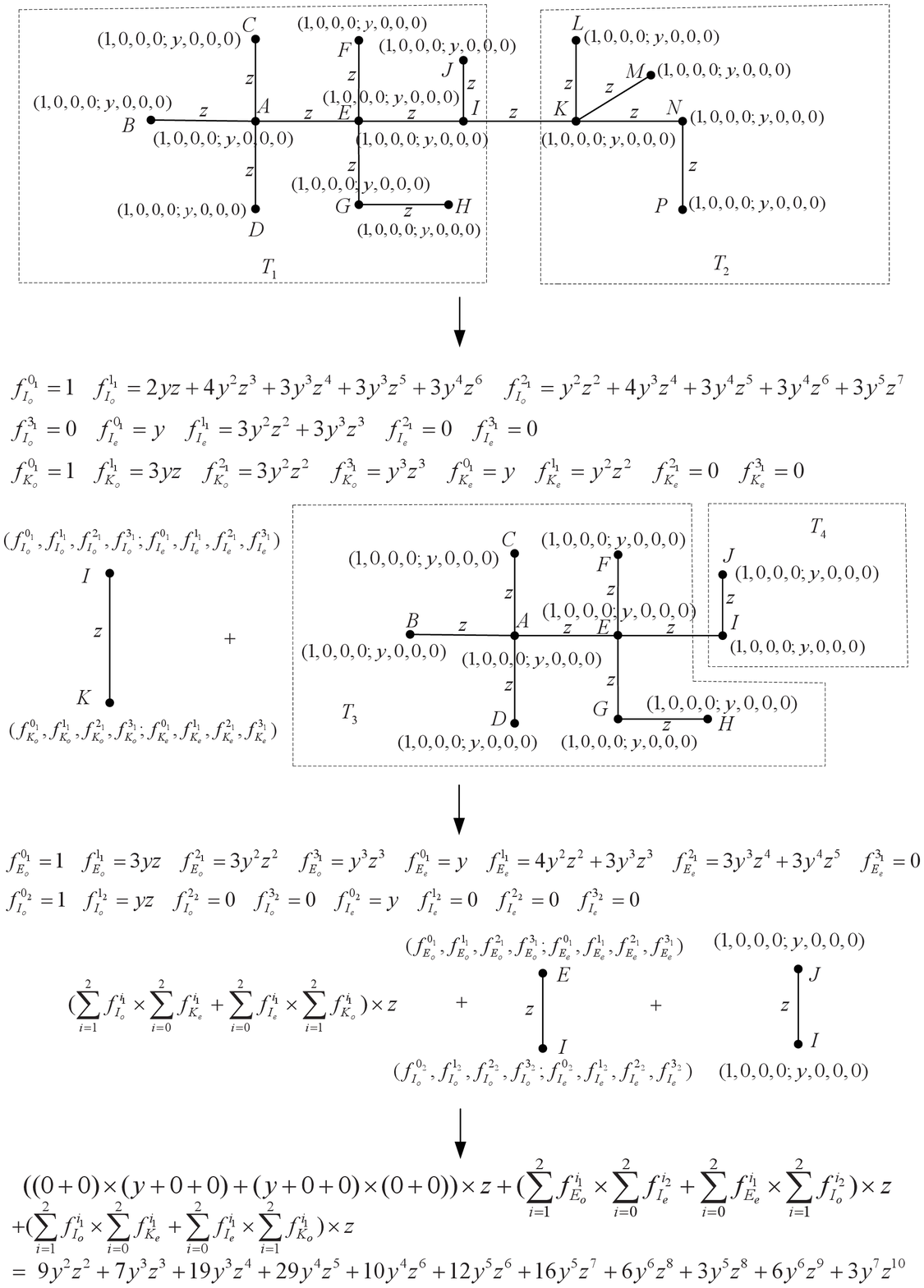}\\
\caption{Illustration of the procedures for computing $F_{{BC}_{\leq 3}}(T;(1,0,0,0;y,0,0,0),z;I)$ of a weighted tree $T$ by Algorithm \ref{Algorithm:BCsubtreenumaxfixedkcontavertex}.}
\label{fig:bcfixverillu}
\end{figure}

By Algorithm \ref{Algorithm:degreecontibcsubtreecontfixedu}, phase $P_5$ of Fig.~\ref{fig:oddevenandtwoforbc}, we have the $F_{\leq 3,o_0}(T;(1,0,0,0;y,0,0,0),\\z;I)=1, F_{\leq 3,e_0}(T;(1,0,0,0;y,0,0,0),z;I)=y$, $F_{\leq 3,o_1}(T;(1,0,0,0;y,0,0,0),z;\\I)=3yz+5y^2z^3+3y^3z^4+3y^3z^5+3y^4z^6, F_{\leq 3,e_1}(T;(1,0,0,0;y,0,0,0),z;I)=6y^2z^2+6y^3z^3$, $F_{\leq 3,o_2}(T;(1,0,0,0;y,0,0,0),z;I)=3y^2z^2+10y^3z^4+6y^4z^5+10y^4z^6+9y^5z^7+3y^5z^8+3y^6z^9, F_{\leq 3,e_2}(T;(1,0,0,0;y,0,0,0),z;I)=9y^3z^4+18y^4z^5+9y^5z^6$, $F_{\leq 3,o_3}(T;(1,0,0,0;y,0,0,0),z;I)=y^3z^3+5y^4z^5+3y^5z^6+7y^5z^7+6y^6z^8+3y^6z^9+3y^7z^{10}, F_{\leq 3,e_3}(T;(1,0,0,0;y,0,0,0),z;I)=0$.

By Algorithm \ref{Algorithm:degereekbcsubtreenum} and last phase of Fig.~\ref{fig:bcillustrationcasedemosubtreemaxk}, we have $F_{{BC}_{\leq 3}}(T;(1,0,0,0;y,0,0,0),\\z)=23y^2z^2+13y^3z^3+22y^3z^4+32y^4z^5+10y^4z^6+12y^5z^6+16y^5z^7+3y^5z^8+6y^6z^8+6y^6z^9+3y^7z^{10}$. By  Algorithm \ref{Algorithm:BCsubtreenumaxfixedkcontavertex} and Fig.~\ref{fig:bcfixverillu}, we have $F_{{BC}_{\leq 3}}(T;(1,0,0,0;y,\\0,0,0),z;I)=9y^2z^2+7y^3z^3+19y^3z^4+29y^4z^5+10y^4z^6+12y^5z^6+16y^5z^7+6y^6z^8+3y^5z^8+6y^6z^9+3y^7z^{10}$. By Algorithm \ref{Algorithm:BCsubtreenumaxfixedkconttwovertex} and phase $P_3$ of Fig.~\ref{fig:oddevenandtwoforbc}, we have $F_{{BC}_{\leq 3}}(T;(1,0,0,0;y,0,0,0),z;A, K)=(3yz+3y^2z^2)\times(y+y^2z^2+0)\times(y+0)\times(1+yz)\times z^3+(y+0+0)\times(3yz+3y^2z^2)\times(1+2yz)\times(y+0)\times z^3=3y^6z^8 + 6y^5z^7 + 9y^5z^6 + 3y^4z^6 + 15y^4z^5 + 6y^3z^4$.

By substituting $y=1, z=1$ to the above generating functions, we can obtain $\eta_{\leq 3,o_1}(T;I)=17, \eta_{\leq 3,o_2}(T;I)=44, \eta_{\leq 3,o_3}(T;I)=28$, $\eta_{\leq 3,e_0}(T;I)=1, \eta_{\leq 3,e_1}(T;I)=12, \eta_{\leq 3,e_2}(T;I)=36,\eta_{\leq 3,e_3}(T;I)=0$. $\eta_{{BC}_{\leq 3}}(T)=146$, $\eta_{{BC}_{\leq 3}}(T;I)=120$, $\eta_{{BC}_{\leq 3}}(T;A,K)=42$, where $\eta_{\leq 3,o_j}(T;I)(j=1,2,3)$ is the number of subtrees in $\mathcal{S}_{\leq 3,o_j}(T;I)$ and $\eta_{\leq 3,e_j}(T;I)(j=0,1,2,3)$ is the number of subtrees in $\mathcal{S}_{\leq 3,e_j}(T;I)$.

Moreover, by substituting $y = 1$ to generation function $F_{{BC}_{\leq 3}}(T;(1,0,0,0;y,\\0,0,0),z)$, we can obtain the edge generating function $F_{{BC}_{\leq 3}}(T;(1,0,0,0;1,0,0,0),\\z)=23z^2+13z^3+22z^4+32z^5+22z^6+16z^7+9z^8+6z^9+3z^{10}$, from the coefficients, we know that there are 23, 13, 22, 32, 22, 16, 9,  6, 3 BC-subtrees on 3 to 11 vertices, respectively,  with maximum degree $\leq 3$.

Another interesting question is: what proportion of all the subtrees(resp.~ BC-subtrees) are subtrees (resp.~ BC-subtrees) with maximum degree $\leq k(\geq 2)$ ?

We now define ratio $r_{k}=\frac{\tilde{\eta}_{\leq k}(T)}{\tilde{\eta}(T)}$(resp.~$\tilde{r}_{k}=\frac{\tilde{\eta}_{{BC}_{\leq k}}(T)}{\tilde{\eta}_{BC}(T)}$), where $\tilde{\eta}(T)$ (resp.~$\tilde{\eta}_{BC}(T)$) and $\tilde{\eta}_{\leq k}(T)$ (resp.~$\tilde{\eta}_{{BC}_{\leq k}}(T)$) represent the average subtree (resp.~BC-subtree) number and average number of subtrees (resp.~BC-subtrees) with maximum degree $\leq k$ of $T$, respectively, here $\tilde{\eta}(T)$ (resp.~$\tilde{\eta}_{BC}(T)$) could be obtained by using the algorithm presented in reference \cite{yan06} (resp.~\cite{yang2013bc}).

\begin{figure}[!hp]
  \centering
  \noindent
  \subfigure[The proportion of subtrees with maximum degree $\leq k$ to all subtrees of 3000 randomly generated trees on $n$ vertices.]{
    \label{fig:stdensitiesdendrimer:a}
    \includegraphics[width=0.92\textwidth]{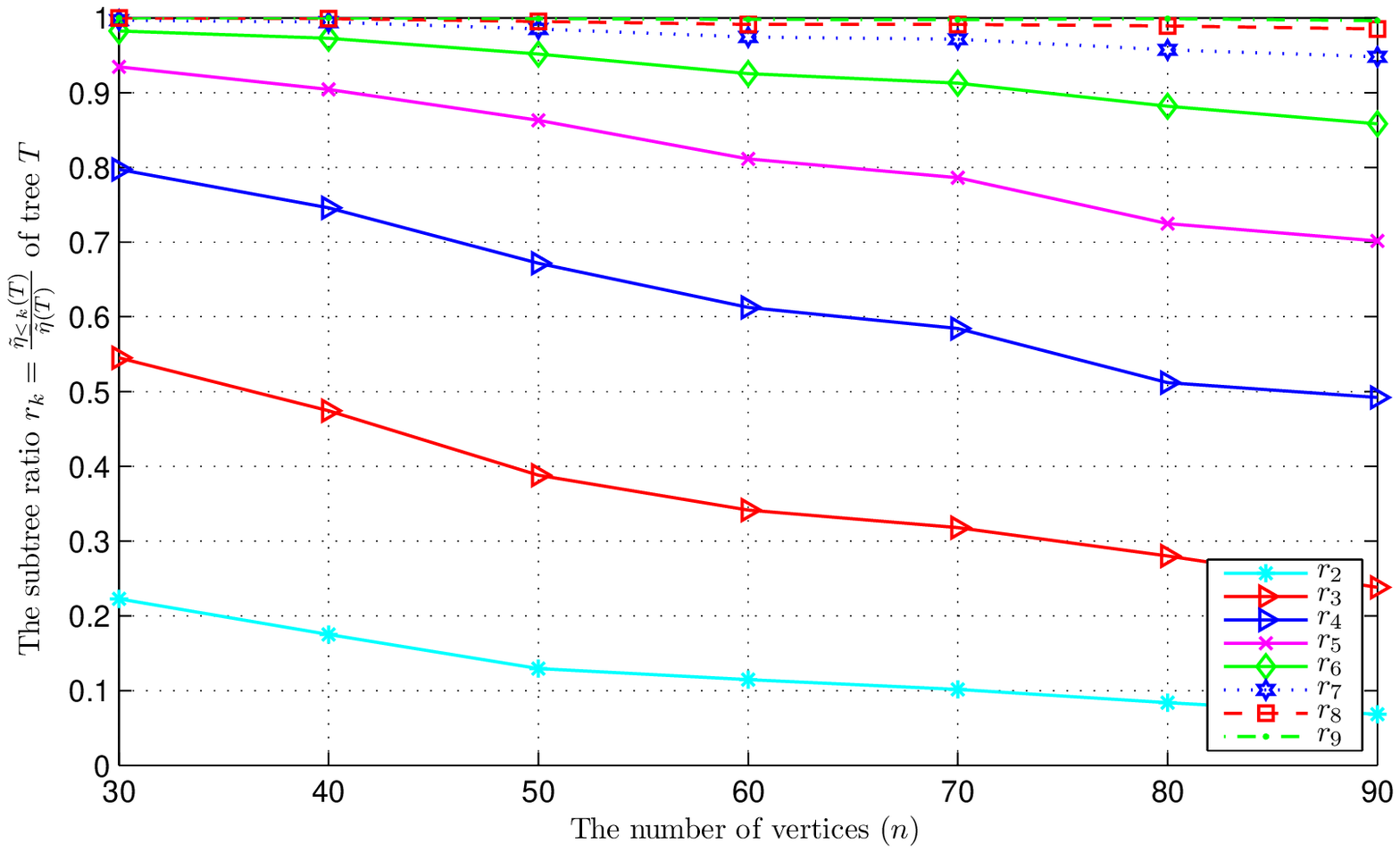}}

    \subfigure[The proportion of BC-subtrees with maximum degree $\leq k$ to all BC-subtrees of 3000 randomly generated trees on $n$ vertices.]{
    \label{fig:stdensitiesdendrimer:b}
  \includegraphics[width=0.92\textwidth]{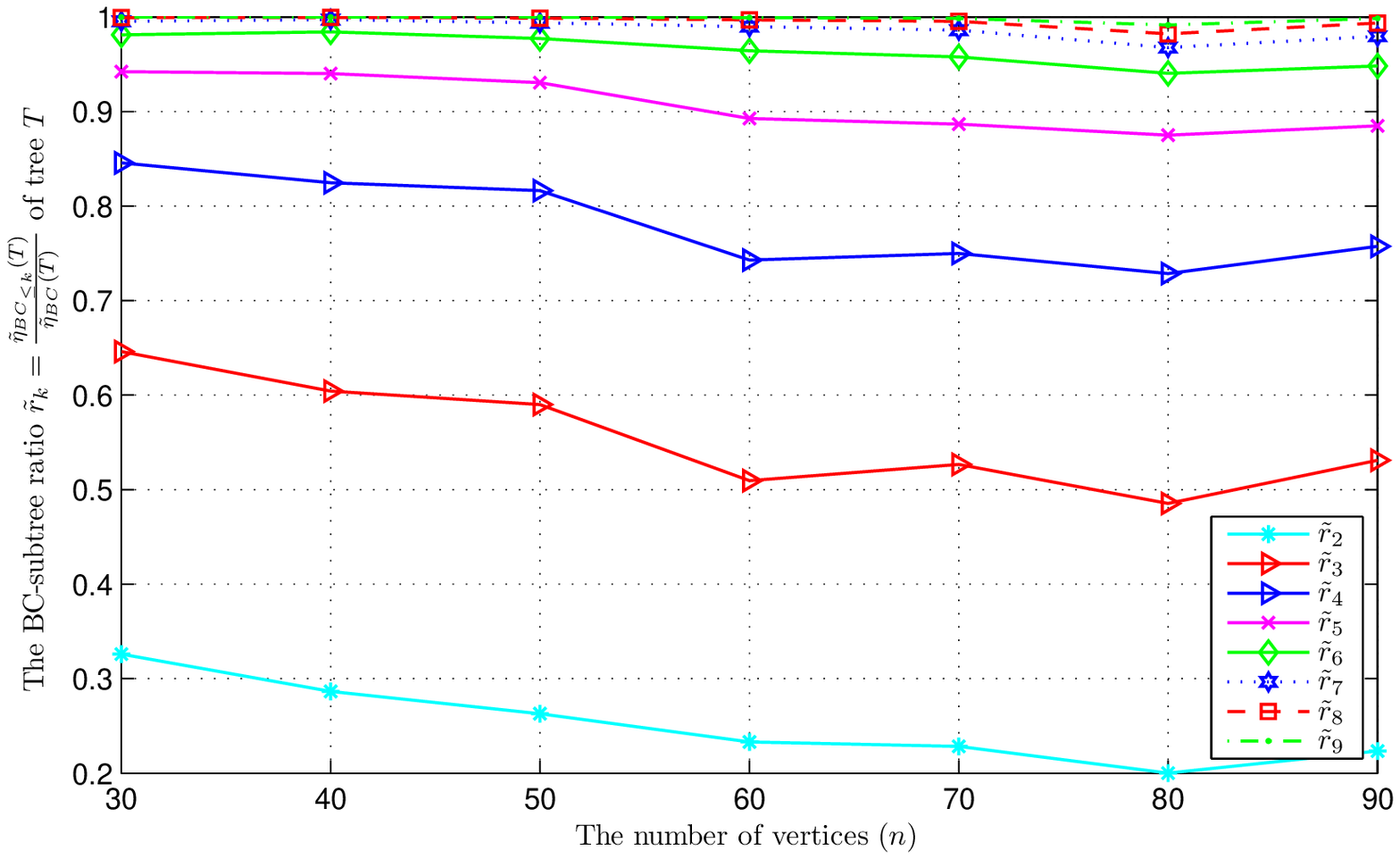}}

  \caption{Asymptotic proportion $r_{k}$ (resp.~$\tilde{r}_{k}$) of subtrees (resp.~BC-subtrees) with maximum degree $\leq k$ to all subtrees (resp.~BC-subtrees) of 3000 randomly generated trees on $n=30,40,50,60,70,80,90$ vertices.}
  \label{fig:propofsubandbcsubtrees} %% label for entire figure
\end{figure}

Through 3000 randomly generated trees $T_i(i=1,2\dots,3000)$ on 30, 40, 50, 60, 70, 80, 90 vertices, with algorithms \ref{Algorithm:subtreenumaxfixedk} and \ref{Algorithm:degereekbcsubtreenum}, we can obtain the asymptotic ratios of trees on $n (= 30, 40, 50, 60, 70, 80, 90)$ vertices,  by observing the Fig. \ref{fig:propofsubandbcsubtrees}, we know that the maximum degree of almost all subtrees (resp.~BC-subtrees) are not bigger than 8. Namely, most of the subtree are subtrees with small degree ($\leq 8$).

\section{Concluding remarks}
\label{Sec:Conclusion}
In this paper, by way of generating functions, we presented algorithms of enumerating all subtrees, subtrees containing a fixed vertex, subtrees containing two distinct vertices, with maximum degree $\leq k$ of trees in  Section \ref{sec:algoforsubtree}, and algorithms of enumerating all BC-subtrees, BC-subtrees containing a fixed vertex, BC-subtrees containing two distinct vertices, with maximum degree $\leq k$ of trees in Section \ref{sec:algoforbcsubtree}. Section \ref{sec:algoforimplementanddiscuss} illustrates the procedures of the proposed enumerating algorithms and briefly discusses the ratios of subtrees (resp.~ BC-subtrees) with maximum degree $k(\geq 2)$ to all subtrees in general tree.

These studies further explored the behavior of subtree numbers and BC-subtree numbers in graphs. In particular, this seems to be the first time that degree constraints were put on BC-subtrees.

For future works, we plan to study the enumeration problem of subtrees and BC-subtrees under other special constraints, such as subtrees and BC-subtrees with diameter $\leq d(\geq 2)$ of trees.

\section*{Acknowledgment}
This work is supported by
the National Natural Science Foundation of China (grant nos. 61702291, 61772102, 11971311, 11531001); Program for Science \& Technology Innovation Talents in Universities of Henan Province(grant no. 19HASTIT029).

\bibliographystyle{elsarticle-num}
\bibliography{NSPHC_subtree}

\end{document}